\newtheorem{theorem}{Theorem}[section]
\newtheorem{corollary}[theorem]{Corollary}
\newtheorem{lemma}[theorem]{Lemma}
\newtheorem{proposition}[theorem]{Proposition}
\newtheorem{definition}[theorem]{Definition}
\newtheorem*{definition*}{Definition}
\newcommand{\F}{\mathbb{F}}
\begin{document}

\title{A functional Loomis--Whitney type inequality in the Heisenberg group and projection theorems over finite fields}

\author{Daewoong Cheong\and 
Thang Pham
\and  Dung The Tran}

\date{}

\maketitle

\begin{abstract}
We establish functional Loomis--Whitney type inequalities in the finite Heisenberg group
$\mathbb{H}^n(\mathbb{F}_q)$. For $n=1$, we determine the sharp region of exponents $(u_1,u_2)$ for which the Heisenberg Loomis--Whitney inequality
\[
\frac{1}{q^3}\sum_{(x,t)\in \mathbb{H}^1(\mathbb{F}_q)}
f_1(\pi_1(x,t))\,f_2(\pi_2(x,t))
\;\lesssim\; \|f_1\|_{L^{u_1}(\mathbb{F}_q^2,dx)}\|f_2\|_{L^{u_2}(\mathbb{F}_q^2,dx)}
\]
holds uniformly in $q$, namely
\[
\frac{1}{u_1}+\frac{2}{u_2}\le 2
\quad\text{and}\quad
\frac{2}{u_1}+\frac{1}{u_2}\le 2,
\]
which includes the endpoint estimate $L^{\frac{3}{2}}\times L^{\frac{3}{2}}\to L^1$.
For general $n$, we prove the symmetric multilinear estimate at the endpoint exponent
$
u=\frac{n(2n+1)}{n+1},
$
using an induction on $n$ that exploits the Heisenberg fiber structure together with a multilinear interpolation scheme.
Specializing to indicator functions yields a sharp Loomis--Whitney type set inequality bounding $|K|$
for every finite $K\subset \mathbb{H}^n(\mathbb{F}_q)$ in terms of the sizes of its $2n$ Heisenberg projections
$\{\pi_j(K)\}_{j=1}^{2n}$, and in particular,
\[
\max_{1\le j\le 2n} |\pi_j(K)|
\;\gtrsim_n\;
|K|^{\frac{2n+1}{2(n+1)}}\,q^{-\frac{1}{2(n+1)}}.
\]
This result is optimal up to absolute constants. Moreover, when $n=1$ and $|K|>q$, we obtain a stronger statement via Vinh's point--line incidence theorem. We also discuss connections to a
boundedness problem for
multilinear forms/operators over finite fields studied by Bhowmik, Iosevich, Koh, and Pham (2025), and to orthogonal projection/covering questions in $\mathbb{F}_q^{2n+1}$ studied by Chen (2018).
\end{abstract}

\section{Introduction}\label{section-1}
The Loomis--Whitney inequality (\cite{LWR}) bounds the volume of a \(d\)--dimensional body in $\mathbb R^d$ by the volumes of its \((d-1)\)--dimensional coordinate projections. In the discrete setting, it reads as follows.  For  \(d\ge 2\), let \(B_1,\dots,B_d\) be finite sets, and let \(A\subseteq B_1\times\cdots\times B_d\). For $i\in \{1, \ldots, d\}$, define
\[
\pi_i\colon A\to B_1\times\cdots\times B_{i-1}\times B_{i+1}\times\cdots\times B_d,\qquad
\pi_i(a_1,\ldots,a_d):=(a_1,\ldots,a_{i-1},a_{i+1},\ldots,a_d).
\]
Then
\begin{equation}\label{eq:111}
 |A|^{\,d-1}\le \prod_{i=1}^d |\pi_i(A)| .
\end{equation}

There are powerful entropy-theoretic generalizations of 
\eqref{eq:111}, notably Han’s inequality and Shearer’s inequality \cite{Han, Ruz}. 
Variants of this inequality, in relation to set addition, also appear in additive combinatorics for abelian groups with numerous applications \cite{GMR, TV06}.

This paper investigates Loomis--Whitney type inequalities
in the noncommutative setting, focusing on the Heisenberg group over a finite field $\mathbb F_q$, where  $q$ is an odd prime power. In this context, such inequalities are not only of independent interest but also shed light on group structures and are connected to structural properties of the Heisenberg group and to related combinatorial problems. 
Related works on the Heisenberg group can be found in \cite{Pham, 3, HH, Shkredov}. Interestingly, this topic is naturally connected to boundedness problems for
multilinear forms/operators over finite fields 
and to orthogonal projection/covering questions in $\mathbb{F}_q^{2n+1}$. Further details will be discussed in Section~\ref{section7}.


In the continuous setting, the Loomis--Whitney type inequalities for the Heisenberg group (and, more generally, Carnot group) have been studied extensively with applications in multilinear restriction theory and related areas;  for example, see \cite{ bct, FP22, Zhang24} and references therein.

We now recall the definition of the Heisenberg group. For a positive integer $n$, the $n$-th Heisenberg group $\mathbb{H}^n(\mathbb{F}_q)$ is the set $\mathbb{F}^{2n+1}_q$ equipped with the group multiplication defined as follows:
for  $(x,t), (x', t') \in \mathbb{F}^{2n+1}_q$ with $x, x' \in \mathbb F_q^{2n}$ and $t, t'\in \mathbb F_q,$ 
\begin{align} \label{group opeartion}
    (x,t) \cdot (x^{\prime}, t^{\prime})=\left( x+x^{\prime}, t+t^{\prime}+ \frac{1}{2}\sum\limits_{j=1}^{n} ( x_j x^{\prime}_{n+j}-x_{n+j}x^{\prime}_j ) \right).
\end{align}
By definition, the group $\mathbb{H}^n(\mathbb{F}_q)$ and the vector space $\mathbb F_q^{2n+1}$ have the same underlying set. For each $1 \le j \le 2n$,  let 
\begin{equation} \label{coordinate-hyperplanes}
  W_j := \{(x,t)\in\F_q^{2n+1} : x_{j}=0\}, \qquad 
\end{equation}
be the coordinate hyperplane corresponding to $x_{j}$, and let
\begin{equation*}
  L_{j} := \{(s e_{j},0) \in \mathbb F_q^{2n+1} : s\in\F_q\}
\end{equation*}
be the coordinate line corresponding to $x_{j}$, where $\{e_j\}_{j=1}^{2n}$ form the standard basis of $\mathbb F_q^{2n}$. 

Then we easily see that $W_j$ and $L_j$ are subgroups of $\mathbb H^n(\mathbb F_q)$  for $j\in \{1,\ldots, 2n\}$. Further, $L_{j}$ 
is the orthogonal complement $W_j^\perp$ of $W_j$, with respect to the standard inner product on $\mathbb F_q^{2n+1}$; see Subsection \ref{subsection;complement} for these materials. 
The subgroups $W_j$ and $L_j$ will be  often referred to as vertical and horizontal, respectively.  We will provide more details on subgroup structures of $\mathbb H^n(\mathbb F_q)$ in Section \ref{section2-1}.

For $x \in \mathbb{F}^{2n}_q$ and $j \in \{1, \ldots, 2n\}$, we use the notation 
$\widehat{x}_j$ to represent the element in $\mathbb{F}^{2n-1}_q$ obtained by entirely removing the $j$-th coordinate of $x$ while the notation $\check{x}_j$ represents the element in $\mathbb{F}^{2n}_q$ obtained by setting the $j$-th coordinate of $x$ to $0$. For $j \in \{1, \ldots, n\}$, let
\begin{equation}
 \begin{cases}\label{sign}
      \pi_j: \mathbb H^n(\mathbb F_q) \rightarrow W_{j}\\
       \pi_{n+j}: \mathbb H^n(\mathbb F_q) \rightarrow W_{n+j}
    \end{cases}
\end{equation}
be the \textit{vertical coordinate projections}, respectively, defined by 
\begin{equation}\label{definition-pi-j}
 \begin{cases}
 \pi_j(x,t)=(\check{x}_{j}, t+\frac{1}{2} x_j x_{n+j}),\\
        \pi_{n+j}(x,t)=(\check{x}_{n+j}, t-\frac{1}{2} x_j x_{n+j}).
    \end{cases}
\end{equation}

A direct computation shows that for each $j\in \{1, \ldots, 2n\}$, an element $(x,t)$ of $\mathbb H^n(\mathbb F_q)$ can be written
\begin{equation}\label{eq:decomp}
 (x,t) = \pi_j(x,t)\cdot (x_{j} e_{j},0), \footnote{The signs $+$ and $-$ in (\ref{definition-pi-j}) are needed to guarantee this product holds uniformly for all $j$.}  
\end{equation}  
where the dot denotes the group multiplication on $\mathbb H^n(\mathbb F_q)$.

The fiber of $\pi_j$ at a point $(u, \tau)$ in $W_j$ is a line in the affine space $\mathbb F_q^{2n+1}$ passing through $(u, \tau)$.  More precisely,
for $j\in \{1, \ldots,n\}$,  the fiber of $\pi_j$ at  $(u, \tau) \in W_{j}$ (so $u_{j}=0$)  is given by
\begin{align*}
\pi_{j}^{-1}\{(u,\tau)\} 
&= \big\{\,\big(u + s e_{j},\; \tau - \tfrac12\,u_{n+j}\, s\big) :\; s\in\F_q\,\big\}
= (u,\tau)\cdot L_{j}, 
\end{align*}
and the fiber of $\pi_{n+j}$ at  $(u, \tau) \in W_{n+j}$ (so $u_{n+j}=0$) is given by
\begin{align*}
\pi_{n+j}^{-1}\{(u,\tau)\} 
&= \big\{\,\big(u + s e_{n+j},\; \tau + \tfrac12\,u_j\, s\big) :\; s\in\F_q\,\big\}
= (u,\tau)\cdot L_{n+j}.
\end{align*}
That is, for each $j\in \{1, \ldots, 2n\}$, the fibers of $\pi_j$  are exactly cosets of $W_j^\perp=L_{j}$. 

Given a set $K$ in $\mathbb{H}^n(\mathbb{F}_q)$, in this paper, we are interested in  the Loomis--Whitney type inequality associated with $K$, namely, bounding the size of $K$ in terms of the sizes of its projections, and its generalizations.

For any set $S$ (in most cases, $S=\mathbb F_q^{2n})$, denote by $\mathscr{F}(S)$ the set of all $\mathbb R$-valued functions on $S$. Note that $\mathscr{F}(S)$ is a vector space over $\mathbb R.$
For each $1 \le r \le \infty$, the $L^r$-norm on $\mathscr{F}(\mathbb F_q^{2n})$ 
is defined as
\[\|f\|_{L^r(\mathbb{F}_q^{2n}, dx)}=\left(\frac{1}{q^{2n}}\sum_{x\in \mathbb{F}_q^{2n}}|f(x)|^r\right)^{ \frac{1}{r} } \] 
for $1 \le r<\infty$, and for $r=\infty$ 
 \[\|f\|_{L^\infty(\mathbb{F}_q^{2n}, dx)}=\max\limits_{x \in \mathbb{F}_q^{2n}} |f(x)|.\]

\smallskip
Throughout, for each $j\in \{1,\ldots, 2n\}$, we shall identify $W_j$ with $\mathbb F_q^{2n}$ via the map $\phi_j: W_j\stackrel{\sim}{\rightarrow}\mathbb F_q^{2n}$ defined by $ (\check{x}_j, t)\mapsto (\widehat{x}_j,t)$. For simplicity, for a function $f$ on $\mathbb F_q^{2n}$ and $(x,t) \in \mathbb H^n(\mathbb F_q)$, we write $f(\pi_j(x,t))$ for $f(\phi_j (\pi_j (x,t)))$.

\begin{definition}\label{definition-LW-u-1-2}
Let $n$ be a positive integer. For extended real numbers $1 \le  u_1, u_2, \ldots, u_{2n} \le \infty$, define $LW(u_1, u_2, \ldots, u_{2n})$ as the smallest constant not depending on $q$, if it exists, such that the inequality
\begin{equation}\label{LW11}
\frac{1}{q^{2n+1}}\sum_{(x,t)\in \mathbb H^n(\mathbb F_q) } \prod_{j=1}^{2n} f_j(\pi_j(x,t))\le LW(u_1, u_2, \ldots, u_{2n}) \prod_{j=1}^{2n}  \|f_j \|_{L^{u_j}(\mathbb{F}_q^{2n}, dx)} 
\end{equation} 
holds for all nonnegative $f_1, f_2, \ldots, f_{2n} \in \mathscr{F}(\mathbb F_q^{2n}) $. 
If it does not exist, then we set  $LW(u_1, u_2, \ldots, u_{2n})=\infty.$
\end{definition}

Although our main results are stated as multilinear functional inequalities, they immediately imply
corresponding set/projection statements. When $f_1= \cdots= f_{2n}$ are all the indicator functions of a subset  $K$ of  $\mathbb H^n(\mathbb F_q)$, then  the inequality (\ref{LW11}) gives a Loomis--Whitney type inequality.

We first present a result in the case $n=1$.
\begin{theorem}\label{GenRL1}  For  $1\le u_1, u_2\le \infty,$ the following holds.  $$ LW(u_1, u_2)\lesssim 1 \quad \mbox{if and only if}\quad \frac{1}{u_1} +  \frac{2}{u_2}\le 2 \quad \mathrm{and} \quad  \frac{2}{u_1} +  \frac{1}{u_2}\le 2 .$$
\end{theorem}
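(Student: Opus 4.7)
The proof splits into necessity (via explicit fiber configurations) and sufficiency (via interpolation from the four extreme points of the admissible region in the $(1/u_1,1/u_2)$-plane), with a single genuinely nontrivial endpoint. For \emph{necessity}, I would test (\ref{LW11}) on indicators of single fibers of $\pi_1$ or $\pi_2$. Fix $p\in W_1$ and let $\ell_p:=\pi_1^{-1}(p)$, a coset of $L_1$ of size $q$; transversality of the $\pi_j$-fibers makes $\pi_2$ injective on $\ell_p$, so $|\pi_2(\ell_p)|=q$. Plugging $f_1=\mathbf 1_{\{p\}}$ and $f_2=\mathbf 1_{\pi_2(\ell_p)}$ into (\ref{LW11}), only $(x,t)\in\ell_p$ contribute, giving LHS $=q^{-2}$ and RHS $=q^{-2/u_1}q^{-1/u_2}$. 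Uniformity in $q$ forces $2/u_1+1/u_2\le 2$; swapping $\pi_1$ and $\pi_2$ gives $1/u_1+2/u_2\le 2$.

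For \emph{sufficiency}, the admissible region is the quadrilateral with vertices $(0,0),(1,0),(0,1),(2/3,2/3)$. Since bilinear Riesz--Thorin makes the set of bounded exponent pairs convex, it suffices to verify the bound at these four vertices. The three corners $\{(\infty,\infty),(1,\infty),(\infty,1)\}$ are elementary; e.g.\ at $(1,\infty)$ one bounds $f_2\le\|f_2\|_\infty$ and uses that each point of $W_1$ has a $\pi_1$-fiber of size $q$, so the LHS collapses to $\|f_1\|_{L^1}\|f_2\|_\infty$. The entire difficulty concentrates at $(3/2,3/2)$.

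For the endpoint $(3/2,3/2)$, I would apply the bijective change of variable $t\mapsto t-x_1x_2/2$ and relabel $y_i=x_i$, $s=t'$, to rewrite
\[
q^3\cdot\text{LHS}\;=\;\sum_{y_1,y_2,s\in\mathbb F_q} f_1(y_2,\,s+y_1y_2)\,f_2(y_1,s)\;=\;\sum_{y_1,s}f_2(y_1,s)\,Tf_1(y_1,s),
\]
where $Tf_1(y_1,s):=\sum_{y_2}f_1(y_2,s+y_1y_2)$ sums $f_1$ along the line in $\mathbb F_q^2$ with slope $y_1$ and $s$-intercept $s$; as $(y_1,s)$ varies, this parametrizes all non-vertical lines. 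Setting $\widetilde T:=q^{-1}T$ and applying Hölder with the dual pair $(3/2,3)$, the desired $LW(3/2,3/2)\lesssim 1$ reduces to the uniform linear Radon estimate $\|\widetilde Tf\|_{L^3(\mathbb F_q^2)}\lesssim\|f\|_{L^{3/2}(\mathbb F_q^2)}$. For indicators $f=\mathbf 1_A$ the decisive input is the cubic line--incidence bound
\[
\sum_{\ell\,\text{non-vertical}}|A\cap\ell|^3\;\le\;\bigl(\max_\ell|A\cap\ell|\bigr)\sum_\ell|A\cap\ell|^2\;\le\;\min(|A|,q)\bigl(q|A|+|A|^2\bigr)\;\lesssim\;q|A|^2,
\]
which rests on two elementary facts: exactly $q$ non-vertical lines pass through each point of $\mathbb F_q^2$, and two points with distinct $y$-coordinates lie on a unique non-vertical line. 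This gives the restricted bound $\|\widetilde T\mathbf 1_A\|_{L^3}\lesssim\|\mathbf 1_A\|_{L^{3/2}}$; extension to arbitrary $f\ge 0$ is by real interpolation against the trivial strong-type $\widetilde T:L^1\to L^1$ and $\widetilde T:L^\infty\to L^\infty$ endpoints (each with norm $1$), using positivity of $\widetilde T$.

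The \emph{main obstacle} is the corner $(3/2,3/2)$: it is the unique extreme point lying strictly outside the convex hull of the trivial bounds together with the $L^2\times L^2$ Cauchy--Schwarz estimate at $(2,2)$, so it admits no interpolation-only proof, and the technical heart is the cubic line--incidence count above (essentially sharp at $A=\mathbb F_q^2$). Promoting the restricted strong-type bound at the endpoint to strong type is the only remaining subtlety, handled by the standard real-interpolation/positivity machinery.
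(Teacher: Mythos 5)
Your overall architecture coincides with the paper's: the same point/line test functions for necessity, the same reduction by bilinear interpolation to the nontrivial vertices $(1,\infty)$, $(\infty,1)$, $(3/2,3/2)$, and the same identification of the entire difficulty with the off-diagonal bound $L^{3/2}\to L^3$ for the line-averaging operator (the paper writes the form as $\langle f_1, Af_2\rangle$ and invokes Koh's endpoint theorem $A(3/2\to 3)\lesssim 1$; your $\widetilde T$ is that operator up to point--line duality). The gap is in your last step, the promotion of the restricted bound $\|\widetilde T\mathbf 1_A\|_{L^3}\lesssim\|\mathbf 1_A\|_{L^{3/2}}$ to the strong-type bound for general $f\ge 0$. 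Real interpolation between the strong types $\widetilde T:L^1\to L^1$ and $\widetilde T:L^\infty\to L^\infty$ produces only the diagonal pairs $L^p\to L^p$ and cannot reach the off-diagonal point $(3/2,3)$. Worse, the very test functions you use for necessity (a point mass and a line indicator) show that $(1/p,1/q)=(2/3,1/3)$ is an \emph{extreme} point of the region where $\widetilde T:L^p\to L^q$ holds uniformly in $q$, so no interpolation from other uniformly bounded exponent pairs can recover it. The remaining elementary route --- decomposing $f=\sum_k 2^{-k}\mathbf 1_{A_k}$ into level sets and summing the restricted bounds --- only yields $\widetilde T:L^{3/2,1}\to L^{3}$, and passing from the Lorentz norm $\|f\|_{L^{3/2,1}}$ to $\|f\|_{L^{3/2}}$ on a measure space with $q^2$ atoms costs a factor on the order of $\log q$, which destroys uniformity in $q$.

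This missing step is precisely the content of the reference the paper leans on: the sharp strong-type endpoint estimate for the X-ray/Radon transform over finite fields proved by Koh \cite{koh}, whose proof is a genuine argument and not an interpolation formality. So you should either quote that result, as the paper does, or supply an actual proof of the strong-type endpoint. (If your goal were only the set inequalities of Corollary \ref{Loomis-Whitney-inequality-F_p^q}, your incidence-count argument giving the restricted bound would already suffice; but Theorem \ref{GenRL1} is stated for arbitrary nonnegative functions, so the strong type is required.) Everything else in your proposal --- the necessity computation, the convexity reduction, the trivial corners, and the rewriting of the form as a pairing with the line transform --- is correct and matches the paper.
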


In this paper, we write $X \lesssim Y$ to mean that there exists some constant $C>0$ such that $X \leq CY$.

\begin{corollary}\label{Theorem-L^3/2}
For any nonnegative \(f_1,f_2\in\mathscr{F}(\F_q^2)\),
\[
\frac{1}{q^3}\sum_{(x,t)\in \mathbb H^1(\F_q)}
f_1(\pi_1(x,t))\,f_2(\pi_2(x,t))
\;\lesssim\;
\|f_1\|_{L^{\frac{3}{2}}(\F_q^2,dx)}\,
\|f_2\|_{L^{\frac{3}{2}}(\F_q^2,dx)}.
\]
\end{corollary}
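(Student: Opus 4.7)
The plan is to deduce Corollary 1.2 immediately from Theorem 1.1 by choosing the exponents $u_1 = u_2 = 3/2$. First I would verify that the pair $(3/2, 3/2)$ lies in the admissible region: computing
\[
\frac{1}{u_1} + \frac{2}{u_2} = \frac{2}{3} + \frac{4}{3} = 2 \qquad \text{and} \qquad \frac{2}{u_1} + \frac{1}{u_2} = 2,
\]
shows that both constraints hold with equality, so $(3/2,3/2)$ sits exactly on the intersection of the two bounding lines of the admissible region described in Theorem 1.1. Consequently Theorem 1.1 gives $LW(3/2,3/2) \lesssim 1$, and unfolding the definition of $LW(u_1,u_2)$ at this pair produces precisely the stated $L^{3/2} \times L^{3/2} \to L^1$ bound for all nonnegative $f_1, f_2 \in \mathscr{F}(\mathbb{F}_q^2)$.

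Since this corollary is a direct endpoint specialization rather than an independent argument, there is no substantive obstacle beyond what is already handled in the proof of Theorem 1.1. The only noteworthy feature is that the $(3/2,3/2)$ endpoint cannot be reached by a naive Hölder/interpolation argument from the simpler corner cases (such as $(1,\infty)$ or $(\infty,1)$, where one factor is trivial), so its validity depends on the full strength of the sharp region identified in Theorem 1.1; once that theorem is established, the corollary is a one-line consequence.
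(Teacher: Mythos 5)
Your proposal is correct and matches the paper's own route: the paper likewise obtains this corollary by specializing Theorem \ref{GenRL1} to $u_1=u_2=\tfrac{3}{2}$, where both constraints $\tfrac{1}{u_1}+\tfrac{2}{u_2}\le 2$ and $\tfrac{2}{u_1}+\tfrac{1}{u_2}\le 2$ hold with equality (see Corollary \ref{LamRes} together with Lemma \ref{remark:L-LW-u-u}). Your closing remark is also consistent with the paper, which indeed treats $(\tfrac{2}{3},\tfrac{2}{3})$ as a critical endpoint requiring the separate input $A(\tfrac{3}{2}\to 3)\lesssim 1$ from Koh's theorem rather than interpolation from the trivial corners.
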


The endpoint exponent \(\frac{3}{2}\) is optimal for general functions \(f_1,f_2\) by Theorem~\ref{GenRL1}. 
On the level of sets, Corollary~\ref{Theorem-L^3/2} implies the bound
\begin{equation}\label{eq:setLW-n=1}
|K|\lesssim q^{\frac{1}{3}}\,|\pi_1(K)|^{\frac{2}{3}}\,|\pi_2(K)|^{\frac{2}{3}},
\qquad K\subset \mathbb H^1(\F_q),
\end{equation}
and this is sharp up to absolute constants. Indeed, fixing \(t_0\in\F_q\) and taking
\[
K:=\{(x_1,0,t_0)\in \mathbb{H}^1(\mathbb{F}_q)\colon x_1\in\F_q\},
\]
we have \(|K|=q\), \(|\pi_1(K)|=1\), and \(|\pi_2(K)|=q\), so \eqref{eq:setLW-n=1} is attained.

This construction can be extended to higher dimensions as follows. Let
\[
K:=\{(x,t_0)\in \mathbb H^n(\F_q): x_{n+1}=\cdots=x_{2n}=0\}.
\]
Then, \(|K|=q^n\), and
\[
|\pi_j(K)|=
\begin{cases}
q^{n-1}, & 1\le j\le n,\\[2pt]
q^{n}, & n+1\le j\le 2n.
\end{cases}
\]
This example naturally suggests the critical symmetric exponent in higher dimensions. Suppose one seeks a uniform estimate of the form
\begin{equation}\label{eq:symmetric-ansatz}
\frac{1}{q^{2n+1}}\sum_{(x,t)\in \mathbb H^n(\F_q)} \prod_{j=1}^{2n} f_j(\pi_j(x,t))
\;\lesssim\;
\prod_{j=1}^{2n}\|f_j\|_{L^u(\F_q^{2n},dx)}
\end{equation}
with a single exponent \(u\) for all inputs.
Applying \eqref{eq:symmetric-ansatz} to indicator functions \(f_j=\mathbf{1}_{\pi_j(K)}\) gives
\[
\frac{|K|}{q^{2n+1}}
\;\lesssim\;
\prod_{j=1}^{2n}\Bigg(\frac{|\pi_j(K)|}{q^{2n}}\Bigg)^{1/u}.
\]
Inserting the above values of \(|K|\) and \(|\pi_j(K)|\) forces
\[
q^n \;\lesssim\; q^{\,2n+1-\frac{2n^2+n}{u}},
\qquad\text{hence}\qquad
u \;\ge\; \frac{n(2n+1)}{n+1}.
\]
Thus, \(\frac{n(2n+1)}{n+1}\) is the smallest possible symmetric 
exponent one can expect when $n>1$.

When \(n>1\), the full description of the admissible region of \((u_1,\dots,u_{2n})\) for which \(LW(u_1,\dots,u_{2n})\lesssim 1\) holds is substantially more intricate than in the planar case \(n=1\). Rather than attempting a complete characterization, in this paper, we focus on the symmetric critical value \(u=\frac{n(2n+1)}{n+1}\), for which the inequality is established in the next theorem.

\begin{theorem}\label{high-dimension-2}   
    For any nonnegative $f_1, \ldots, f_{2n} \in \mathscr{F}(\mathbb{F}_q^{2n})$, we have
\begin{align}\label{main-inequality-2}
 \frac{1}{q^{2n+1}}\sum_{(x,t)\in \mathbb H^n(\mathbb F_q)} \prod_{j=1}^{2n} f_j(\pi_j(x,t)) 
 \lesssim
 \prod_{j=1}^{2n}  \|f_{j} \|_{L^{\frac{n(2n+1)}{n+1}}(\mathbb{F}_q^{2n}, dx)}.
\end{align}
\end{theorem}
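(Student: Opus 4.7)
The plan is to induct on $n$, taking the base case $n=1$ (where $u_1=3/2$) directly from Corollary \ref{Theorem-L^3/2}, and passing from $\mathbb{H}^{n-1}(\mathbb{F}_q)$ to $\mathbb{H}^n(\mathbb{F}_q)$ by exploiting a fibration that singles out one symplectic pair, mirroring the fiber-bundle induction developed in the continuous setting in \cite{FP22}. Specifically, for the inductive step I write each element of $\mathbb{H}^n(\mathbb{F}_q)$ as $(a,b;y,t)$ with $(a,b)=(x_1,x_{n+1})$ and $(y,t)\in\mathbb{H}^{n-1}(\mathbb{F}_q)$, where $y=(x_2,\ldots,x_n,x_{n+2},\ldots,x_{2n})$. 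For $j\notin\{1,n+1\}$, the restriction of $\pi_j$ to each slice $\{(a,b)\}\times\mathbb{H}^{n-1}(\mathbb{F}_q)$ is, after the identification $W_j\cong\mathbb{F}_q^{2n}$, an $\mathbb{H}^{n-1}$-projection with the values $(a,b)$ inserted as parameters; meanwhile, the restrictions of $\pi_1$ and $\pi_{n+1}$ to the three-dimensional fiber $\{(x_1,x_{n+1},t):x_1,x_{n+1},t\in\mathbb{F}_q\}$ over a fixed $y$ form an $\mathbb{H}^1$-projection pair.

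This decomposition leads naturally to two endpoint multilinear estimates. In \emph{Endpoint A}, I would pointwise bound $f_1$ and $f_{n+1}$ by their $L^\infty$-norms, apply the inductive hypothesis to the inner sum over $(y,t)\in\mathbb{H}^{n-1}(\mathbb{F}_q)$ with the remaining $2n-2$ factors at exponent $u_{n-1}=(n-1)(2n-1)/n$, and then integrate in $(a,b)\in\mathbb{F}_q^2$ via generalized H\"older. In \emph{Endpoint B}, I would apply Corollary \ref{Theorem-L^3/2} to the pair $f_1(\pi_1)f_{n+1}(\pi_{n+1})$ on each three-dimensional fiber, bound the $2n-2$ remaining factors in $L^\infty$, and integrate in $y$. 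The two endpoints produce bounds involving mixed norms of the $f_j$ on $\mathbb{F}_q^{2n}$, reflecting the asymmetry between fiber and base variables. Multilinear real interpolation between these endpoints then yields the symmetric estimate at the critical exponent $u_n=n(2n+1)/(n+1)$, and Minkowski's integral inequality permits the interpolated mixed-norm output to be dominated by the intrinsic $L^{u_n}(\mathbb{F}_q^{2n})$-norm at the symmetric point.

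The main obstacle is the delicate bookkeeping of mixed norms. The quadratic twists in the definitions of $\pi_j$ entangle the fiber variables $(a,b)$ with the base variables $(y,t)$, so both endpoint estimates naturally output mixed norms $L^{r}_{(a,b)}L^{s}_{(y,t)}$ of the $f_j$ rather than intrinsic $L^{p}(\mathbb{F}_q^{2n})$ norms. The auxiliary H\"older exponent in Endpoint A and the balance of $L^{3/2}$ against $L^\infty$ in Endpoint B must be calibrated so that these mixed norms interpolate to $L^{u_n}(\mathbb{F}_q^{2n})$ exactly at the symmetric point $u_1=\cdots=u_{2n}=u_n$. Verifying this arithmetic---equivalently, checking that $u_n=n(2n+1)/(n+1)$ arises as the correct convex combination of the inductive exponent $u_{n-1}=(n-1)(2n-1)/n$ and the planar exponent $3/2$ at which the mixed norms collapse to a pure $L^{u_n}$ norm---together with confirming that the quadratic twists do not obstruct the relevant changes of variables in either endpoint, is the main calculational content of the proof.
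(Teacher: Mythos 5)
Your overall architecture (induction on $n$ via the fibration that singles out one symplectic pair, base case from Corollary \ref{Theorem-L^3/2}, multilinear interpolation) matches the spirit of the paper, but the specific pair of endpoints you propose is too lossy, and the interpolation arithmetic does not land on the critical exponent. The problem is the $L^\infty$ placements. Take $n=2$, so the target is $u_2=10/3$. Your idealized Endpoint A would read $I\lesssim \|f_1\|_\infty\|f_3\|_\infty\|f_2\|_{3/2}\|f_4\|_{3/2}$; this is false: with $f_1=f_3\equiv 1$ and $f_2=f_4=\mathbf{1}_{\{z_1=0\}}$ one gets $I=q^{-1}$ while the right-hand side is $q^{-4/3}$. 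The honest version of Endpoint A is weaker, because after applying the inductive hypothesis on each slice you must run H\"older in the two fiber variables $(a,b)$ over $2n-2$ factors, which forces an outer exponent of at least $2n-2$ there; since $2n-2>u_{n-1}=(n-1)(2n-1)/n$, the resulting mixed norm $L^{2n-2}_{(a,b)}L^{u_{n-1}}_{(y,t)}$ is \emph{not} dominated by $\|f_j\|_{L^{u_{n-1}}(\mathbb F_q^{2n})}$, and Minkowski only lets you move the larger exponent inward, not shrink it. For $n=2$ the honest endpoints are (up to mixed-norm refinements) $(\infty,\infty,2,2)$ and $(2,2,\infty,\infty)$ in the four slots; their midpoint is $L^4$ in every slot, and $\|f\|_{10/3}\le\|f\|_4$ goes the wrong way, so you only recover the estimate at $L^4$, strictly weaker than $L^{10/3}$. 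For general $n$ the same computation shows the interpolated fiber-direction exponent exceeds $u_n$, so no calibration of the auxiliary H\"older exponents rescues the scheme: two endpoints with $L^\infty$ factors simply do not span a segment through the symmetric critical point.

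The paper's fix is structural: it never puts any factor in $L^\infty$. It proves, by induction on $n$, a family of $n$ asymmetric estimates (Theorem \ref{theorem-high-dimension}), one for each symplectic pair $k$, in which the distinguished pair $(f_k,f_{n+k})$ is measured in $L^{(2n+1)/2}$ and \emph{all} remaining $2n-2$ factors in $L^{2n+1}$; the induction is carried out on this asymmetric statement (peeling off $f_{2n}$ and $f_n$ at exponent $2n+1$ via H\"older and Minkowski before invoking the level-$(n-1)$ asymmetric bound). Averaging the $n$ reciprocal exponents with equal weights $1/n$ gives $\frac{1}{n}\bigl[\frac{2}{2n+1}+(n-1)\frac{1}{2n+1}\bigr]=\frac{n+1}{n(2n+1)}=\frac{1}{u_n}$ exactly, and $(n-1)$ rounds of multilinear interpolation deliver the symmetric estimate. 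So to repair your argument you would need to (i) replace the $L^\infty$ slots by $L^{2n+1}$, (ii) prove all $n$ asymmetric endpoints rather than two, and (iii) run the induction on the asymmetric statement rather than on the symmetric one.
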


The proof of the corresponding \(2n\)--multilinear inequality is not a formal iteration of the case \(n=1\). It is inspired by the induction (on dimension) scheme developed in the continuous setting \cite{FP22}, but requires nontrivial adaptations to the finite field setting and to obtaining constants uniform in $q$. It has two main components. First, we establish a family of asymmetric estimates (Theorem \ref{theorem-high-dimension}) in which one symplectic pair of inputs is measured in \(L^{\frac{2n+1}{2}}\) while the remaining inputs are measured in \(L^{2n+1}\). These bounds are proved by a delicate induction on \(n\) that repeatedly freezes one symplectic coordinate pair and rewrites the \(n\)--dimensional form as an averaged \((n-1)\)--dimensional Heisenberg Loomis--Whitney form. Second, we combine the resulting family of asymmetric bounds through a multilinear interpolation scheme to reach the symmetric estimate at the critical exponent \(u=\frac{n(2n+1)}{n+1}\).
    
As an application of Theorem \ref{high-dimension-2}, we obtain the following.

\begin{corollary}\label{Loomis-Whitney-inequality-F_p^q}
     For $K\subset \mathbb{H}^{n}(\mathbb{F}_q)$, we have 
\begin{align}\label{Loomis-Whitney-inequality}
 |K|\lesssim q^{\frac{1}{2n+1}} \prod_{j=1}^{2n} |\pi_j(K)|^{\frac{n+1}{n(2n+1)}}.
\end{align}
In particular, for 
$K\subset \mathbb{H}^1(\mathbb{F}_q)$,
\[\max\{|\pi_1(K)|, |\pi_2(K)|\}  \gtrsim |K|^{\frac{3}{4}}q^{-\frac{1}{4}}.\]
\end{corollary}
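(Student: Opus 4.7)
The plan is to obtain Corollary \ref{Loomis-Whitney-inequality-F_p^q} as a direct specialization of Theorem \ref{high-dimension-2} to characteristic functions. Concretely, for a fixed $K\subset\mathbb H^n(\mathbb F_q)$, I set $f_j:=\mathbf{1}_{\pi_j(K)}$ for each $j=1,\ldots,2n$ (viewing $\pi_j(K)\subset W_j$ as a subset of $\mathbb F_q^{2n}$ via the identification $\phi_j$ fixed in the paper). Plugging these into \eqref{main-inequality-2} is then just a matter of matching the two sides.

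For the left-hand side, I note that if $(x,t)\in K$ then by definition $\pi_j(x,t)\in\pi_j(K)$ for every $j$, so $\prod_{j=1}^{2n}f_j(\pi_j(x,t))=1$ on $K$. Since each $f_j$ is nonnegative, dropping the contribution from $(x,t)\notin K$ yields the lower bound
\[
\frac{1}{q^{2n+1}}\sum_{(x,t)\in\mathbb H^n(\mathbb F_q)}\prod_{j=1}^{2n}f_j(\pi_j(x,t))\;\ge\;\frac{|K|}{q^{2n+1}}.
\]
For the right-hand side, the normalized $L^{u}$-norm of an indicator function on $\mathbb F_q^{2n}$ is
\[
\|f_j\|_{L^{u}(\mathbb F_q^{2n},dx)}=\Bigl(\frac{|\pi_j(K)|}{q^{2n}}\Bigr)^{1/u},
\qquad u=\frac{n(2n+1)}{n+1}.
\]

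Combining the two and clearing powers of $q$ reduces the problem to the arithmetic identity
\[
(2n+1)-\frac{4n^{2}}{u}=\frac{(2n+1)^{2}-4n(n+1)}{2n+1}=\frac{1}{2n+1},
\]
which gives exactly the claimed inequality $|K|\lesssim q^{\frac{1}{2n+1}}\prod_{j=1}^{2n}|\pi_j(K)|^{\frac{n+1}{n(2n+1)}}$. For the planar case $n=1$ the exponents become $1/3$ and $2/3$, so writing $M:=\max\{|\pi_1(K)|,|\pi_2(K)|\}$ and bounding each factor by $M$ yields $|K|\lesssim q^{1/3}M^{4/3}$, from which solving for $M$ gives $M\gtrsim |K|^{3/4}q^{-1/4}$.

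There is essentially no obstacle here beyond bookkeeping: Theorem \ref{high-dimension-2} carries all the analytic content, and the only things to verify are the exponent computation above and the convention that the $L^u$-norm of an indicator is a normalized density. The mildly delicate point worth flagging is the factor $q^{1/(2n+1)}$ on the right-hand side of \eqref{Loomis-Whitney-inequality}: it arises from the mismatch between the normalized counting measure $q^{-(2n+1)}$ on $\mathbb H^n(\mathbb F_q)$ and the normalized counting measures $q^{-2n}$ on the hyperplanes $W_j$, and is what prevents the Heisenberg bound from collapsing to the flat Loomis--Whitney estimate.
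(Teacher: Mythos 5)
Your proposal is correct and follows essentially the same route as the paper: both specialize Theorem \ref{high-dimension-2} to indicator functions of the projections (the paper phrases it via $K\subseteq\bigcap_j\pi_j^{-1}(K_j)$ with $K_j=\pi_j(K)$) and then match exponents. If anything, your bookkeeping of the normalized norms $\|\mathbf{1}_{\pi_j(K)}\|_{L^u}=(|\pi_j(K)|/q^{2n})^{1/u}$ is more careful than the paper's, which writes $\|\mathbf{1}_{K_j}\|_{L^p}=|K_j|^{1/p}$ yet still arrives at the correct $q^{1/(2n+1)}$ factor.
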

It follows from the example above that this corollary is sharp and we cannot replace the factor $q^{\frac{1}{2n+1}}$ by $|\pi_v(K)|^{\frac{1}{2n+1}}$, where  $\pi_v(K)$ is the image of   $K$ under the (vertical) projection $\pi_v: \mathbb H^n(\mathbb F_q) \rightarrow \mathbb F_q$ defined by $(x,t) \mapsto t $.

When $n=1$ and $|K|>q$, by using the point--line incidence theorem due to Vinh \cite{Vinh}, we are able to provide a better lower bound for the quantity $\max\{|\pi_1(K)|, |\pi_2(K)|\}$.
\begin{theorem}\label{Theorem-L-W-ineq:q-q^1/2}
    For $K\subset \mathbb{H}^{1}(\mathbb{F}_q)$, we have 
    \[|K|\lesssim \frac{|\pi_1(K)| \, |\pi_2(K)|}{q}+q^{ \frac{1}{2} }\sqrt{|\pi_1(K)| \, |\pi_2(K)|}.\]
In particular, 
\[  \max\{|\pi_1(K)|, |\pi_2(K)|\} \gtrsim \min \left\lbrace |K|^{ \frac{1}{2} }q^{ \frac{1}{2} }, ~|K| q^{- \frac{1}{2} }\right\rbrace.\]
\end{theorem}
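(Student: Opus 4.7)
The plan is to encode the elements of $K$ as point--line incidences in $\F_q^2$ and invoke Vinh's incidence bound \cite{Vinh}. Writing out the projection formulas for $n=1$, set $(a,b) := \pi_1(x_1,x_2,t) = (x_2,\, t+\tfrac{1}{2}x_1 x_2)$ and $(c,d) := \pi_2(x_1,x_2,t) = (x_1,\, t-\tfrac{1}{2}x_1 x_2)$. Then $(x_1,x_2,t)$ is uniquely recoverable from $((a,b),(c,d))$: one reads off $x_1 = c$, $x_2 = a$, and takes $t = \tfrac{b+d}{2}$; in particular these pairs must satisfy the relation $b - ca - d = 0$. So the map $K \to \pi_1(K) \times \pi_2(K)$, $(x_1,x_2,t)\mapsto(\pi_1(x_1,x_2,t),\pi_2(x_1,x_2,t))$, is injective.

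Next, for each $(c,d)\in \pi_2(K)$, let $\ell_{(c,d)} \subset \F_q^2$ be the non-vertical line $\{(a,b): b = ca + d\}$; the map $(c,d)\mapsto \ell_{(c,d)}$ is injective since slope and intercept are recovered from the line. Setting $P := \pi_1(K)$ and $L := \{\ell_{(c,d)}: (c,d)\in \pi_2(K)\}$, the previous paragraph shows that the injection sends $K$ into the incidence set of $(P,L)$, yielding
\[
|K| \le I(P,L), \qquad |P| = |\pi_1(K)|, \qquad |L| = |\pi_2(K)|.
\]
Vinh's point--line incidence theorem over $\F_q^2$ gives
\[
\left|I(P,L) - \frac{|P||L|}{q}\right| \le q^{1/2}\sqrt{|P||L|},
\]
from which the first claimed inequality follows immediately.

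For the second statement, write $M := \max\{|\pi_1(K)|,|\pi_2(K)|\}$, so that $|\pi_1(K)||\pi_2(K)| \le M^2$, and hence $|K| \lesssim M^2/q + q^{1/2}M$. A standard case split at the threshold $M \asymp q^{3/2}$ shows that either $M^2/q$ dominates, giving $M \gtrsim |K|^{1/2}q^{1/2}$, or $q^{1/2}M$ dominates, giving $M \gtrsim |K|q^{-1/2}$; taking the weaker of the two yields $M \gtrsim \min\{|K|^{1/2}q^{1/2},\, |K|q^{-1/2}\}$.

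The only genuine content is the correct identification of $K$ with an incidence set: once the projection formulas are unpacked, the equation $b - ca - d = 0$ naturally defines a line in the $(a,b)$-plane parameterized by $(c,d)$, and the uniqueness of preimages under $(\pi_1,\pi_2)$ ensures the map is injective rather than merely well-defined. After this, the argument is a direct application of Vinh's theorem together with a routine optimization.
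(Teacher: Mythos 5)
Your proposal is correct and follows essentially the same route as the paper: both identify elements of $K$ with incidences between the point set $\pi_1(K)$ and the lines $b=ca+d$ parameterized by $\pi_2(K)$, apply Vinh's theorem, and then optimize; the paper phrases the reduction through the bilinear form $\mathcal{L}$ and the change of variables $\phi(x,y,t)=(x,y,t+\tfrac{xy}{2})$, whereas you argue directly via injectivity of $(\pi_1,\pi_2)$ on $K$, but this is only a cosmetic difference.
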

This theorem is also sharp in its range. To see this, let $A,B\subset\F_q$ with $|A|=|B|=m$, and put $T=\F_q$.
Define
\[
K:=A\times B\times T \;\subset\; \mathbb H^1(\F_q).
\]
Then
\[
\pi_1(K)=B\times \F_q,\quad |\pi_1(K)|=mq,
\qquad
\pi_2(K)=A\times \F_q,\quad |\pi_2(K)|=mq,
\]
and
\[
|K|=|A|\,|B|\,|T|=m^2 q.
\]
Consequently,
\[
\frac{|\pi_1(K)|\,|\pi_2(K)|}{q}
\;+\; q^{\frac{1}{2}}\sqrt{|\pi_1(K)|\,|\pi_2(K)|}
\;=\; m^2 q \;+\; m q^{\frac{3}{2}},
\]
which matches $|K|$ up to absolute constants.

\section{Subgroups of the Heisenberg group and projections}\label{section2-1}
This section is devoted to the study of subgroups $G$ of the Heisenberg group $\mathbb{H}^n(\mathbb{F}_q)$ and the analysis of the projections onto relevant subgroups. The structural statements developed here will be used throughout the paper and provide tools for subsequent work.

\subsection{Basics on subgroups of $\mathbb H^n(\mathbb F_q)$}
When viewing a vector space $V$ over $\mathbb F_q$ as a group, we consider only its additive group structure,
forgetting scalar multiplication. 
Trivially, a subspace of the vector space $V$ is a subgroup of $V$. If the field $\mathbb F_q$ is a prime field, i.e., $q=p$ for some prime $p$, then the converse is also true.
Indeed, if $W$ is a subgroup of $V,$ then we can give a scalar multiplication on $W$ that makes $W$ into a subspace of $V.$ To see this, we write $\mathbb F_p=\{0,1,\ldots, p-1\}.$ For $t \in \mathbb F_p$ and $w \in W$, define $t w:=w+\cdots +w$ ($t$-times). We easily see that this coincides with the  scalar multiplication on $W$ induced from $V$, and so $W$ is a subspace of $V.$ If $\mathbb F_q$ is not a prime field, then there is a subgroup $W$ of $V$ that is not a subspace.  For example,  if $q=p^r$ for some $r\geq 2,$  the subfield $\mathbb F_p $ is a subgroup of the (one dimensional) vector space $\mathbb F_q, $ but not a subspace of $\mathbb F_q$ (over $\mathbb F_q$). Over a general $\mathbb F_q$ with $q=p^r,$  in order to obtain all subgroups of a vector space $V$, it is enough  to view $V$ as a vector space over $\mathbb F_p$ and find all subspaces of $V$ over $\mathbb F_p.$ Similarly,  linear maps between two vector spaces over a prime field are precisely group homomorphisms between the  corresponding underlying groups. 
For a subspace $W$  of the vector space $\mathbb F_q^{m},$ let $W^\perp$ be the subspace of $\mathbb F_q^{m}$  defined as 
$W^\perp=\{v \in \mathbb F_q^{m} : \langle v, w \rangle=0 \, \, \mathrm{for\, \, \,   all } \,  w\in W\}$, where 
$\langle \, ,\, \rangle$ is the standard inner product on $\mathbb F_q^{m}.$
We may call $W^\perp$ the orthogonal complement of $W$ in $\mathbb F_q^m$ with respect to $\langle \, ,\, \rangle$.

The quadratic term in the group multiplication \eqref{group opeartion} on  $\mathbb H^n(\mathbb F_q)$   plays an important role in understanding $\mathbb H^n(\mathbb F_q)$ and its subgroups. So we treat it especially.  For $x, x^\prime \in \mathbb F_q^{2n}$, define
\begin{equation*}\label{omega}
\omega( x, x^\prime):=\sum_{i=1}^n(x_i x_{n+i}^{\prime}-x_i^{\prime} x_{n+i}).
\end{equation*} Then  $\omega$ is a symplectic form, i.e., a nondegenerate skew-symmetric bilinear form on $\mathbb F_q^{2n}$. Note that $\omega$ is determined by
 the skew symmetric matrix $A_n$,  i.e., $\omega(x,x^\prime)=x \cdot A_n \cdot (x^\prime)^t,$ where
\[
A_n:=\begin{bmatrix}
   0&I_n \\
   -I_n&0 \\
    \end{bmatrix},
\]
and
$(\, )^t$ stands for the transpose of a matrix.

\begin{definition}\label{def1} A subspace $S$ of $\mathbb F_q^{2n}$ is called {\it isotropic} if  $\omega (x,x^\prime)=0$
for any two vectors $x, x^\prime \in S$.

\end{definition}
Note that  by linear algebra, maximal isotropic subspaces of $\mathbb F_q^{2n}$ have dimension $n$, and all subspaces of dimension one are isotropic.

Let $\pi_h:\mathbb H^n(\mathbb F_{q})\rightarrow \mathbb F_q^{2n}$ be the projection defined by $\pi_h(x, t)=x$.  Then $\pi_h$ is a group homomorphism and so if $G$ is a subgroup of 
$\mathbb H^n(\mathbb F_q),$ then  the image $\pi_h(G)$ is a subgroup of $\mathbb F_q^{2n}.$   Recall that  $\pi_v:\mathbb H^n(\mathbb F_{q})\rightarrow \mathbb F_q$ is the projection defined by $\pi_v(x, t)=t$. Note that $\pi_v$ is not a group homomorphism, but its restriction to the `vertical line' $\{(0,t)\in \mathbb H^n(\mathbb F_{q}) \, | \, t \in \mathbb F_q \}$ is a group isomorphism.

 Define an action of the group $\mathbb F_q^*$ on $\mathbb H^n(\mathbb F_q) $ by $s\cdot (x,t)=(s x, s^2 t)$ for $s \in \mathbb F_q^*$ and $(x,t) \in \mathbb H^n(\mathbb{F}_q).$ This action shall be called the dilation action, following \cite{Balog}.

\begin{definition}\label{def2}
A subgroup $G$ of $\mathbb H^n(\mathbb F_{q})$ is called {\it homogeneous} if $G$ is invariant under the  dilation action of  $\mathbb F_q^*$  on $\mathbb H^n(\mathbb F_{q})$.
\end{definition}
Recall that the orbit of an element $(x,t)\in \mathbb H^n(\mathbb F_{q})$ under the dilation action is given by
 \[\mathcal O_{\mathbb F_q^*}(x,t):=\{ s\cdot(x,t) \in \mathbb H^n (\mathbb F_q) \, | \, s \in \mathbb F_q^*\}.\] 
 If $G$ is any homogeneous subgroup of $\mathbb H^n(\mathbb F_{q})$ containing  
$(x, t)$,  then $G$ necessarily includes $\mathcal O_{\mathbb F_q^*}(x,t)$.

\begin{lemma}\label{lemma:orbit size}
For $(x,t)\in \mathbb H^n(\mathbb F_{q})$,  the cardinality of  $\mathcal O_{\mathbb F_q^*}(x,t)$ is given as follows.
\begin{itemize}
\item[(i)] If $x\ne 0$,  then $|\mathcal O_{\mathbb F_q^*}(x,t)|=q-1.$
\item[(ii)] If $x=0$ and $t\ne 0$, then  $|\mathcal O_{\mathbb F_q^*}(x,t)|=\frac{q-1}{2}.$
\item[(iii)] If $x=0$ and $t=0$,  then $|\mathcal O_{\mathbb F_q^*}(x,t)|=1.$
\end{itemize}
\end{lemma}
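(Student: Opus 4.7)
The plan is to invoke the orbit--stabilizer theorem: for the dilation action of $\mathbb{F}_q^*$ on $\mathbb{H}^n(\mathbb{F}_q)$, the orbit of a point $(x,t)$ has cardinality $|\mathbb{F}_q^*|/|\mathrm{Stab}(x,t)|$, where
\[
\mathrm{Stab}(x,t)=\{s\in\mathbb{F}_q^*:\ sx=x \text{ and } s^2 t=t\}.
\]
So the whole argument reduces to identifying this stabilizer in each of the three cases, and the orbit sizes then fall out automatically from $|\mathbb{F}_q^*|=q-1$.

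For case (i), I would pick some coordinate $x_i\neq 0$; then $sx_i=x_i$ forces $s=1$, so $\mathrm{Stab}(x,t)=\{1\}$ and the orbit has $q-1$ elements. For case (ii), the condition $sx=x$ is automatic since $x=0$, so the stabilizer reduces to $\{s\in\mathbb{F}_q^*:s^2=1\}$. Here I would use the implicit standing hypothesis that the Heisenberg group law \eqref{group opeartion} involves $\tfrac{1}{2}$, which forces $\mathrm{char}(\mathbb{F}_q)\neq 2$; consequently $\{s\in\mathbb{F}_q^*:s^2=1\}=\{\pm 1\}$ has exactly two elements, giving orbit size $(q-1)/2$. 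For case (iii) the point $(0,0)$ is fixed by every $s$, hence its orbit is itself.

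There is no real obstacle here; the only subtlety worth flagging explicitly is the parity of $q$ in case (ii), which I would state at the beginning (or as a standing assumption in Section~\ref{section2-1}) so that $s^2=1$ genuinely has two solutions rather than one. Everything else is a one-line computation per case.
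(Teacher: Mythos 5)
Your proof is correct and follows essentially the same route as the paper: identify the stabilizer in each case and apply the orbit--stabilizer formula $|\mathcal O_{\mathbb F_q^*}(x,t)|\,|\mathrm{Stab}(x,t)|=q-1$. Your explicit remark that $\mathrm{char}(\mathbb F_q)\neq 2$ is needed in case (ii) is a welcome clarification that the paper leaves implicit (and you also avoid the paper's typo of writing the trivial stabilizer as $\{0\}$ rather than $\{1\}$).
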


\begin{proof}
It can be easily verified that the isotropy groups of $(x,t)$ for the cases (i), (ii), (iii)  \[\mathrm{ISO}_{\mathbb F_q^*}(x,t)=\{s\in \mathbb F_q^* \, | \, s\cdot (x,t)=(x,t)\}\] are $\{1\},$ $\mathbb Z_2=\{-1,1\}$ and $\mathbb F_q^*$ , respectively. Then, the lemma is immediate from the formula 
 \[|\mathcal O_{\mathbb F_q^*}(x,t)| \cdot |\mathrm{ISO}_{\mathbb F_q^*}(x,t)|=|\mathbb F_q^*|=q-1.\]
\end{proof}

Now we will identify  and count all subgroups $G$ of $\mathbb H^n$ for two special cases, namely the case  when $\mathbb F_q$ is a prime field, and the case where subgroups  $G$ are homogeneous.

First assume that $\mathbb F_q$ is a prime field, say, $q=p$ for some prime $p.$ In this case we write $\mathbb F_p$ for $\mathbb F_q$.
First  fix a subgroup (equivalently,  a subspace) $S$ of $\mathbb F_p^{2n}.$ If $G$ is a subgroup of 
$\mathbb H^n(\mathbb F_p)$ such that $\pi_h(G)=S$,  by cardinality reason for finite groups, there are only two possibilities 
\[|G|=|S|,\]
\[|G|=|S|p.\]
If   $|G|=|S|p,$ then necessarily $G=S\times \mathbb F_p.$ 
If  $|G|=|S|,$ then $G$ is a graph  $\Gamma_S(\rho)$  of a map $\rho: S \rightarrow \mathbb F_p$,
 \[\Gamma_S(\rho):=\{(x,t)\subset \mathbb H^n(\mathbb{F}_p) \, |\, x\in S, t=\rho(x)\}.\]
But since $G=\Gamma_S(\rho)$ is a group, i.e., closed under the induced multiplication, $S$ should be isotropic and $\rho$ is a group homomorphism.
Summarizing, we have

\begin{proposition} \label{prop:subgroup} If $G$ is  a subgroup of $\mathbb H^n(\mathbb F_p)$ for a prime $p$, then $G$ is one of following two types
\begin{itemize}
\item[(a)]  $G=S\times \mathbb F_p$ for a  subspace $S$ of $\mathbb F_p^{2n}$. 
\item[(b)]   $G= \Gamma_S(\rho)$ for  a group  homomorphism (equivalently, a linear map) $\rho: S \rightarrow \mathbb F_p$, where $S$ is an isotropic  subspace of $\mathbb F_p^{2n}$. 
\end{itemize}
\end{proposition}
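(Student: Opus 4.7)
The plan is to analyze $G$ through the horizontal projection $\pi_h:\mathbb H^n(\mathbb F_p)\to\mathbb F_p^{2n}$, $(x,t)\mapsto x$, and to split into cases according to the size of $\ker(\pi_h|_G)$. Since $\pi_h$ is a group homomorphism and $\mathbb F_p$ is a prime field, the image $S:=\pi_h(G)$ is automatically a subgroup of $\mathbb F_p^{2n}$ and hence a linear subspace, as explained earlier in this section. Meanwhile $\ker(\pi_h|_G)$ sits inside the vertical line $\{0\}\times\mathbb F_p$, which is cyclic of prime order $p$, so $|\ker(\pi_h|_G)|\in\{1,p\}$. These two possibilities will produce exactly the types (a) and (b) in the statement.

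First I would handle the case $|\ker(\pi_h|_G)|=p$, i.e.\ $\{0\}\times\mathbb F_p\subseteq G$. For each $x\in S$, pick any $(x,t_x)\in G$; multiplying on the right by $(0,s)$ and noting that the quadratic correction $\tfrac12\omega(x,0)$ vanishes gives $(x,t_x+s)\in G$ for every $s\in\mathbb F_p$. Hence the entire fiber $\{x\}\times\mathbb F_p$ lies in $G$, and consequently $G=S\times\mathbb F_p$, which is conclusion (a). No isotropy condition on $S$ is needed here because the vertical direction absorbs the quadratic term.

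Next I would treat the case $|\ker(\pi_h|_G)|=1$, in which $\pi_h|_G:G\to S$ is a bijection and $G$ is the graph $\Gamma_S(\rho)$ of a unique function $\rho:S\to\mathbb F_p$. To extract the algebraic constraints on $S$ and $\rho$, I would compute the two products
\[
(x,\rho(x))\cdot(x',\rho(x'))=\bigl(x+x',\ \rho(x)+\rho(x')+\tfrac12\omega(x,x')\bigr),
\]
\[
(x',\rho(x'))\cdot(x,\rho(x))=\bigl(x+x',\ \rho(x)+\rho(x')-\tfrac12\omega(x,x')\bigr),
\]
both of which lie in $G$ and project to $x+x'$ under $\pi_h$. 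Injectivity of $\pi_h|_G$ forces their second coordinates to agree, so $\omega(x,x')=0$ for all $x,x'\in S$; i.e.\ $S$ is isotropic. Substituting this back identifies the second coordinate with $\rho(x+x')$, yielding $\rho(x+x')=\rho(x)+\rho(x')$. Since $\mathbb F_p$ is prime, this additive homomorphism is automatically $\mathbb F_p$-linear, giving conclusion (b).

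The main subtlety I anticipate is the reliance on inverting $2$: the step passing from $\tfrac12\omega=-\tfrac12\omega$ to $\omega=0$ tacitly assumes $p$ is odd, which is already implicit in the very definition of the group law. Beyond this, the argument is bookkeeping — choosing a set-theoretic section of $\pi_h|_G$ in case (a) and verifying well-definedness of $\rho$ in case (b) — and I do not foresee additional obstacles.
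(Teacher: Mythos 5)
Your proposal is correct and follows essentially the same route as the paper: project via $\pi_h$, note $S=\pi_h(G)$ is a subspace since $p$ is prime, split on whether $\ker(\pi_h|_G)$ is trivial or all of $\{0\}\times\mathbb F_p$ (the paper phrases this as the cardinality dichotomy $|G|=|S|$ versus $|G|=|S|p$), and in the graph case deduce isotropy of $S$ and additivity of $\rho$ from closure under the group law. Your explicit comparison of the two products $(x,\rho(x))\cdot(x',\rho(x'))$ and $(x',\rho(x'))\cdot(x,\rho(x))$ simply fills in the detail the paper leaves as an assertion, and your use of $p$ odd is consistent with the $\tfrac12$ already present in the group law.
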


Next, let us  treat  homogeneous subgroups of $\mathbb H^n(\mathbb F_q)$. 
If $G$ is a homogeneous subgroup of $\mathbb H^n(\mathbb F_{q})$, then $\pi_h(G)$ is a subspace of $\mathbb F_q^{2n}$ (over $\mathbb F_q$). So, we fix a subspace  $S$ of  $\mathbb F_q^{2n}$, then find all subgroups $G$ of $\mathbb H^n(\mathbb F_q)$  such that $\pi_h(G)=S.$  Assume that $q=p^r$ for some prime $p$ and a positive integer $r$. The projection $\pi_h: \mathbb H^n(\mathbb F_{q})\rightarrow \mathbb F_q^{2n}$ induces a group homomorphism $\pi_h|_G:$ $G \rightarrow S.$ Then, elements of the kernel $K$ of $\pi_h|_G$ are of the form $(0,t)$ for $t \in \mathbb F_q$. Also, note that the restriction $\pi_v|_K:K \rightarrow \mathbb F_q$ embeds $K$ into $\mathbb F_q$ as a subgroup. Thus, $|K|$ divides $q=p^r$, and so $|K|=p^\ell$ for some $\ell \leq r.$  Firstly, assume that $K$ is nontrivial, i.e., contains an element $(0,t)$ with $t\ne0.$ Since $K$ is homogeneous, $K$ includes $\mathcal O_{\mathbb F_q^*}(0,t)$. So, by Lemma \ref{lemma:orbit size} (ii),
we  obtain $|K|\geq \frac{q-1}{2}= \frac{p^r-1}{2}$.  This forces $|K|=p^r=q$ since $p$ is odd. (Note that if $\ell\leq r-1$, then it would be that $|K| < \frac{q-1}{2}.$) Since $|G|=|S||K|=|S|q,$ in this case we have $G=S\times \mathbb F_q.$
Secondly, assume that $K=\{(0,0)\}$ is trivial.  Then $\pi_h|_G:G\rightarrow S$ is an isomorphism, and so, as in Proposition \ref{prop:subgroup}, $S$ is isotropic and $G= \Gamma_S(\rho)$ for some group homomorphism(equivalently, a linear map over $\mathbb F_p$)
$\rho:S\rightarrow \mathbb F_q.$ We claim  that $\rho$ is trivial.  To show this, let  $(x,\rho(x)) \in \Gamma_S(\rho)=G$ with $0\ne x\in \mathbb F_q^{2n}$. Then by the homogeneity condition,  for any $s\in \mathbb F_q^*$, we have  
\begin{equation} \label{equality-rho}
s\cdot(x, \rho(x))=(s x, s^2 \rho(x))=(sx, \rho(sx)).
\end{equation}   
But, in particular, if $s\in \mathbb F_p^*$, then $\rho(sx)=s\rho(x)$ since $\rho$ is linear over $\mathbb F_p.$ By this fact and \eqref{equality-rho}, we obtain the equality
$s^2 \rho(x)=s \rho(x)$ for any  $s\in \mathbb F_p^*$. This implies that $\rho(x)=0$ for all nonzero $x\in \mathbb F_q^{2n}$, i.e., $\rho$ is trivial. Therefore, $G=\Gamma_S(\rho)=S\times \{0\}.$  In conclusion,  we have proved
\begin{proposition} \label{prop: homogenous subgroup} If $G$ is a  homogeneous subgroup of $\mathbb H^n(\mathbb F_q)$,  then $G$ is one of the following two cases 
\begin{itemize}
\item[(a)] $G=S\times \mathbb F_q$ for a subspace $S$ of $\mathbb F_q^{2n}$. 
\item[(b)]    $G=S\times \{0\}$ for an isotropic  subspace $S$ of $\mathbb F_q^{2n}$. 
\end{itemize}
\end{proposition}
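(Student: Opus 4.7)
The plan is to split $G$ via the short exact sequence induced by the horizontal projection: set $S := \pi_h(G)$ and $K := \ker(\pi_h|_G)$, and analyze these two pieces separately. First I would observe that $S$ is automatically a subspace of $\mathbb F_q^{2n}$ over $\mathbb F_q$. Indeed, $\pi_h$ intertwines the dilation action $s\cdot(x,t)=(sx,s^2t)$ on $\mathbb H^n(\mathbb F_q)$ with ordinary scalar multiplication on $\mathbb F_q^{2n}$, so homogeneity of $G$ forces $S$ to be closed under $\mathbb F_q^*$-scaling; together with additive closure (as $S$ is a subgroup) and the trivial case $s=0$, this makes $S$ a subspace.

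Next I would control the kernel. Every element of $K$ has the form $(0,t)$, so $\pi_v|_K$ embeds $K$ as a subgroup of $\mathbb F_q$; writing $q=p^r$, this gives $|K|=p^\ell$ for some $\ell\le r$. Homogeneity of $G$ passes to $K$. If $K$ contained some $(0,t_0)$ with $t_0\neq0$, then $K\supseteq \mathcal{O}_{\mathbb F_q^*}(0,t_0)$, and Lemma~\ref{lemma:orbit size}(ii) gives $|K|\ge (q-1)/2$. Since $p$ is odd and $|K|$ must be a power of $p$ dividing $q=p^r$, this forces $|K|=q$, i.e.\ $K=\{0\}\times\mathbb F_q$. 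Combined with $|G|=|S|\cdot|K|$, we obtain $G=S\times\mathbb F_q$, which is case (a).

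It remains to handle the case $K=\{(0,0)\}$. Then $\pi_h|_G: G \to S$ is a group isomorphism, so $G$ is the graph $\Gamma_S(\rho)$ of a well-defined map $\rho:S\to\mathbb F_q$. A direct computation from the group law \eqref{group opeartion}, applied to the product of $(x,\rho(x))$ and $(x',\rho(x'))$, forces $\omega(x,x')=0$ for all $x,x'\in S$ (so $S$ is isotropic) and makes $\rho$ a group homomorphism. Viewing both $S$ and $\mathbb F_q$ as $\mathbb F_p$-vector spaces, $\rho$ is then automatically $\mathbb F_p$-linear. Finally, I would invoke the homogeneity condition: for every $s\in\mathbb F_q^*$ and $x\in S$,
\[
s\cdot(x,\rho(x))=(sx,s^2\rho(x))\in G,
\]
so $\rho(sx)=s^2\rho(x)$. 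Restricting to $s\in\mathbb F_p^*$ and using $\mathbb F_p$-linearity of $\rho$ gives $s\rho(x)=s^2\rho(x)$, i.e.\ $s(s-1)\rho(x)=0$ for every $s\in\mathbb F_p^*$. Choosing any $s\ne 0,1$ (possible since $p$ is odd) forces $\rho\equiv 0$, and hence $G=S\times\{0\}$, which is case (b).

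The main obstacle to be careful about is the tension between the additive/$\mathbb F_p$-linear nature of $\rho$ (which is all one gets for free from $\rho$ being a group homomorphism when $q$ is not prime) and the quadratic $s^2$-scaling coming from the dilation action. The argument closes precisely because one can compare $s^2$ and $s$ on the prime subfield, where both sides are controlled by $\mathbb F_p$-linearity, and because $p$ is odd ensures that the element $s-1$ can be made invertible while $s$ is also invertible.
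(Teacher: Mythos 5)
Your proposal is correct and follows essentially the same route as the paper: split $G$ via $\pi_h$, use Lemma~\ref{lemma:orbit size}(ii) to force a nontrivial kernel to be all of $\{0\}\times\mathbb F_q$, and in the trivial-kernel case compare the quadratic dilation scaling $s^2\rho(x)$ with the $\mathbb F_p$-linearity of $\rho$ on the prime subfield to conclude $\rho\equiv 0$. The only difference is cosmetic: you supply slightly more justification that $\pi_h(G)$ is an $\mathbb F_q$-subspace, which the paper asserts without comment.
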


\subsection{Orthogonal complement of a homogeneous subgroup of $\mathbb H^n(\mathbb F_q)$}\label{subsection;complement}
By Proposition \ref{prop: homogenous subgroup}, homogeneous subgroups $G$ of $\mathbb H^n(\mathbb F_q)$ are some subspaces of $\mathbb F_q^{2n+1}$ via the identification $\mathbb H^n(\mathbb F_q)=\mathbb F_q^{2n+1}$. By abuse of notation, through this identification,  we take $G^\perp$  for homogeneous subgroups $G$ of $\mathbb H^n(\mathbb F_q)$. If $G=S\times \{0\}$ for an isotropic  $S$  is a homogeneous subgroup,  then
we have $G^\perp=S^\perp \times \mathbb F_q $, which is necessarily a (vertical) homogeneous subgroup of  $\mathbb H^n(\mathbb F_q)$.
If $G=S\times \mathbb F_q$, where $S$ is an arbitrary subspace of $\mathbb F_q^{2n}$, then $G^\perp=S^\perp \times \{0\}$. In this case $G^\perp$  is not necessarily a homogeneous subgroup of $\mathbb H^n(\mathbb F_q)$. For example, if $\mathrm{dim}\, S \le n-1$, then $\mathrm{dim} \, S^\perp \ge n+1 $ and so $S$ is not isotropic by the dimension reason. Thus $G^\perp=S^\perp \times \{0\}$ is not a homogeneous subgroup of $\mathbb H^n(\mathbb F_q)$. (In fact, it is not even a subgroup of $\mathbb H^n(\mathbb F_q)$.)

\subsection{Counting subgroups of  $\mathbb H^n(\mathbb F_q)$ }
Now we count subgroups of $\mathbb H^n(\mathbb F_q)$ for the two special cases in the above.
For $k=0, \ldots, 2n$, let $\mathrm{Gr}(k, 2n)$ be the variety of $k$ dimensional subspaces in $\mathbb F_q^{2n}.$ For $k=0,\ldots ,n$, let
$\mathrm{IG}(k, 2n)$ be the variety of $k$ dimensional isotropic subspaces in $\mathbb F_q^{2n}$.

First we count subgroups of $\mathbb H^n(\mathbb F_p)$ for a prime $p$, using Proposition \ref{prop:subgroup}.
For a subspace $S$  of $\mathbb F_p^{2n}$ of dimension $k$ ($|S|=p^k$), let $n(S)$(resp. $n(S)^\prime$) be the number of subgroups $G$ of $\mathbb H^n(\mathbb F_p)$ such that $\pi_h(G)=S$ and $|G|=p^k$ (resp. $|G|=p^{k+1}$). Then, by Proposition \ref{prop:subgroup}, the number of all subgroups of $\mathbb H^n(\mathbb F_p)$ equals
\[  \sum_{k=0}^{2n}\sum _{S\in \mathrm{Gr}(k, 2n)} |n(S)^\prime| +  \sum_{k=0}^{n}\sum _{S\in \mathrm{IG}(k,  2n)} |n(S)|. \]

We easily see that $n(S)^\prime=1$ for each $S \in \mathrm{Gr}(k, 2n) $; and $n(S)=p^k$ for each $S \in \mathrm{IG}(k, 2n) $ because there are $p^k$ group homomorphisms (linear maps) from $S$ to $\mathbb F_p$, identified with $1\times k$ matrices with entries in $\mathbb F_p$.

This number equals
\begin{equation} \label{count1} \sum_{k=0}^{2n}|\mathrm{Gr}(k, 2n)| +  \sum_{k=0}^{n} p^k | \mathrm{IG}(k,  2n)|. 
\end{equation}

 Next, we count homogeneous subgroups of $\mathbb H^n(\mathbb F_q)$, using Proposition \ref{prop: homogenous subgroup}. 
Fix a subspace $S$ of $\mathbb F_q^{2n}$ of dimension $k$ (so $|S| = q^k$). For such $S$, let $m(S)$ (resp.\ $m(S)'$) be the number of homogeneous subgroups $G$ with $\pi_h(G) = S$ and $|G| = q^k$ (resp.\ $|G| = q^{k+1}$). 
Then, by Proposition \ref{prop: homogenous subgroup}, the number of all homogeneous subgroups of $\mathbb H^n(\mathbb F_q)$  equals

\[  
\sum_{k=0}^{2n}\sum _{S\in \mathrm{Gr}(k, 2n)} |m(S)^\prime| +  \sum_{k=0}^{n}\sum _{S\in \mathrm{IG}(k,  2n)} |m(S)|. 
\]
 Since $m(S)^\prime=1$ for each $S \in \mathrm{Gr}(k, 2n)$; and $m(S)=1$ for each  $S \in \mathrm{IG}(k, 2n) $, the  
 number  of all homogeneous subgroups of $\mathbb H^n(\mathbb F_q)$ equals
\begin{equation}\label{count2}
\sum_{k=0}^{2n}|\mathrm{Gr}(k, 2n)| +  \sum_{k=0}^{n}  | \mathrm{IG}(k,  2n)|. 
\end{equation}

Now, by \eqref{count1} and \eqref{count2}, enumerating subgroups of $\mathbb H^n(\mathbb F_q)$ is reduced to counting elements of   $|\mathrm{Gr}(k, 2n)|$ and  $|\mathrm{IG}(k, 2n)|$. Let us give counting formulas for elements of these varieties.  
For an indeterminate $q$ and an integer $m>0$, let $[m]_q$ be a polynomial in $q$ 
\[[m]_q:=\sum_{i=0}^{m-1} q^{i}=\frac{1-q^m}{1-q},\]
and set \[[m]_q!:=[m]_q [m-1]_q \cdots [2]_q[1]_q.\]
Then, the following formulas can be found in the literature, for example, in \cite{Carter, Wan}.

\begin{proposition}
The cardinalities of   $\mathrm{Gr}(k, 2n)$ and  $\mathrm{IG}(k, 2n)$, respectively,  are given by  
 \[|\mathrm{Gr}(k, 2n)|=\frac{[2n]_q!}{[k]_q![(2n-k)]_q!},\]
\[|\mathrm{IG}(k, 2n)|=\frac{[n]_q!}{[k]_q![n-k]_q!}   \cdot \prod _{i=n-k+1}^n\left ( q^i+1\right).\]
\end{proposition}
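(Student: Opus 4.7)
The plan is the classical ``count flags then divide'' recipe, applied twice: once to $\F_q^{2n}$ to recover the Gaussian binomial coefficient, and once to the symplectic space $(\F_q^{2n},\omega)$ to handle the isotropic count. The symplectic input enters only through two well-known linear-algebra facts: nondegeneracy of $\omega$ gives $\dim V^\perp=2n-\dim V$ for every subspace $V$, and $V$ is isotropic iff $V\subseteq V^\perp$ (where here $V^\perp$ denotes the $\omega$-orthogonal complement, which must be kept carefully distinct from the Euclidean $W_j^\perp=L_j$ used in Section~\ref{section2-1}).

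For $|\mathrm{Gr}(k,2n)|$, I would count ordered $k$-tuples of linearly independent vectors in $\F_q^{2n}$, obtaining $\prod_{i=0}^{k-1}(q^{2n}-q^i)$, and then divide by the $\prod_{i=0}^{k-1}(q^k-q^i)$ ordered bases of each $k$-dimensional subspace. Pulling out the common factor $q^i$ from each numerator and denominator term and rewriting via $[m]_q=(q^m-1)/(q-1)$ collapses the ratio to the claimed $[2n]_q!/([k]_q!\,[2n-k]_q!)$.

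For $|\mathrm{IG}(k,2n)|$, I would enumerate isotropic flags $0=V_0\subset V_1\subset\cdots\subset V_k$ with $\dim V_i=i$ by extending one vector at a time. A vector $v$ extends an isotropic $V_i$ to an isotropic $V_{i+1}:=V_i+\F_q v$ precisely when $v\in V_i^\perp\setminus V_i$; since $\dim V_i^\perp=2n-i$ and $V_i\subseteq V_i^\perp$, there are $q^{2n-i}-q^i$ admissible choices, and each resulting $V_{i+1}$ is produced by $q^{i+1}-q^i$ such vectors. Hence the number of isotropic $(i+1)$-dimensional extensions of a fixed $V_i$ is
\[
\frac{q^{2n-i}-q^i}{q^{i+1}-q^i}=\frac{q^{2n-2i}-1}{q-1}=[n-i]_q\,(q^{n-i}+1),
\]
using $q^{2n-2i}-1=(q^{n-i}-1)(q^{n-i}+1)$. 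Taking the product over $i=0,\ldots,k-1$ counts all length-$k$ isotropic flags, and dividing by the $[k]_q!$ complete flags that live inside any fixed $V_k$ (every subspace of an isotropic space is again isotropic, so this flag count is the usual one) gives $|\mathrm{IG}(k,2n)|$. After rewriting $[n]_q[n-1]_q\cdots[n-k+1]_q=[n]_q!/[n-k]_q!$ and reindexing by $j=n-i$, the stated closed form drops out.

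The only step that is not pure Gaussian-binomial bookkeeping is the extension count inside the symplectic space, and the difficulty there is entirely controlled by the two structural identities $\dim V_i^\perp=2n-i$ and $V_i\subseteq V_i^\perp$; once these are in hand, both formulas follow from the same flag-counting template. Since both results are classical, citing \cite{Wan,Carter} alongside this self-contained derivation suffices.
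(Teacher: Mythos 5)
Your derivation is correct, but note that the paper does not prove this proposition at all: it simply records the formulas and refers to \cite{Wan, Carter}, so there is no ``paper proof'' to match against. Your self-contained flag-counting argument is the standard one and checks out. For $\mathrm{Gr}(k,2n)$ the ratio $\prod_{i=0}^{k-1}(q^{2n}-q^i)/\prod_{i=0}^{k-1}(q^k-q^i)$ does collapse to the Gaussian binomial after cancelling the powers of $q$. For $\mathrm{IG}(k,2n)$ the two structural inputs you isolate are exactly what is needed, and one small point worth making explicit is why ``$V_{i+1}=V_i+\F_q v$ is isotropic iff $v\in V_i^{\perp_\omega}$'': this uses that $\omega$ is alternating, i.e.\ $\omega(v,v)=0$ for every $v$, which holds identically for the form $\omega(x,x')=\sum_i(x_ix'_{n+i}-x'_ix_{n+i})$ defined in the paper (not merely skew-symmetric, which in odd characteristic would anyway suffice). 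With that, the extension count $(q^{2n-i}-q^i)/(q^{i+1}-q^i)=[n-i]_q(q^{n-i}+1)$, the observation that every complete flag in an isotropic $V_k$ is automatically an isotropic flag (so the denominator is the usual $[k]_q!$), and the reindexing $j=n-i$ give precisely the stated closed form. Your proof is a genuine addition relative to the paper, which outsources this to the literature; as you say, either the citation or this short derivation suffices.
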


In Proposition \ref{prop: homogenous subgroup}, we dealt with only homogeneous subgroups of $\mathbb H^n(\mathbb F_q)$.  Using the idea in Propositions \ref{prop:subgroup} and \ref {prop: homogenous subgroup}  we can still construct some subgroups of $\mathbb H^n(\mathbb F_q)$. For example, fix a subgroup $S$ of $\mathbb F_q^{2n}$ so that $S$ is a subspace over $\mathbb F_p$. Let $F(S)$ be the smallest subfield of $\mathbb F_q$ containing all components $x_i \in \mathbb F_q $ of $x=(x_1, \ldots, x_{2n})$, where $x$ runs through $S$. Then for any subfield $F$ of $\mathbb F_q$ with $F(S)\subset F$, the product $S\times F$ is a subgroup of $\mathbb H^n(\mathbb F_q)$. Also, if $S$ is isotropic, then as in Proposition \ref{prop:subgroup}, we can obtain a subgroup $G$ of $\mathbb H^n(\mathbb F_q)$ as a graph of a group homomorphism. The group thus constructed is of the non-product type. In case $S$ is not isotropic, we could not find a  subgroup $G$  of  the non-product type, with $\pi_h(G)=S$, and we do not know whether there exists such a subgroup $G$.  It would be interesting to investigate existence of such subgroups of the non-product type, with $\pi_h(G)=S$ for  a non-isotropic subgroup $S$, or, more generally, to classify and count  all subgroups of $\mathbb H^n(\mathbb F_q)$.

In conclusion, the coordinate subgroups $W_1, \dots, W_{2n}$ discussed here form a homogeneous family that will be central to our main results. A detailed analysis of other homogeneous subgroups will be reserved for future work, to maintain focus and clarity in the current discussion.

\section{Proof of Theorem \ref{GenRL1}}\label{section2}
 Define a bilinear operator (form) $\mathcal{L}$ on  $\mathscr{F}(\mathbb F_q^2)$  by
 \begin{equation}\label{lineDe} \mathcal{L}(f_1,f_2):= \frac{1}{q^3} \sum_{x=(x_1,x_2), y=(y_1,y_2)\in \mathbb F_q^2} 1_{x_1\cdot y_1+ y_2=x_2} f_1(x) f_2(y),\end{equation}
 for  any $f_1, f_2 \in \mathscr{F}(\mathbb F_q^2).$ Let $\langle \, ,  \, \rangle$ be the `standard' inner product on $\mathscr{F}(\mathbb F_q^2)$, i.e., 
 \[\langle f_1, f_2\rangle=\frac{1}{q^2}\sum_{x=(x_1, x_2)\in \mathbb F_q^2}f_1(x)f_2(x).\]
 Then we can write $\mathcal{L}(f_1, f_2)=\langle f_1, A f_2\rangle$, where $A$ is an endomorphism on $\mathscr{F}(\mathbb F_q^2)$ defined by \[\left(A f_2\right)(x):=  \frac{1}{q} \sum_{y=(y_1,y_2) \in\mathbb F_q^2} 1_{x_1y_1 +y_2= x_2} f_2(y) \] for $f \in \mathscr{F}(\mathbb F_q^2)$. 
Note that the bilinear operator $\mathcal{L}$ is not symmetric.

For $1 \le r, s \le \infty,$  denote  by $A(s \to r)$  the smallest number, if it exists, such that the inequality 
\begin{align*}
    \|Af\|_{L^r(\mathbb{F}^2_q, dx)}
    \leq A(s \to r) \| f \|_{L^s(\mathbb{F}^2_q, dx)} 
\end{align*} 
holds for all nonnegative $f \in \mathscr{F}(\mathbb F_q^2).$ 

To prove Theorem \ref{GenRL1}, 
we introduce a constant $L(u_1, u_2)$ as an alternative to the constant $LW(u_1, u_2).$
\begin{definition}\label{definition-L-u-u}
For $1\le u_1, u_2\le \infty,$ 
define $L(u_1, u_2)$ as the smallest positive constant, if it exists, such that the following estimate holds for all nonnegative $f_1, f_2 \in \mathscr{F}(\mathbb F_q^2)$
\begin{equation} \label{inequality-L} \mathcal{L}(f_1,f_2) \le L(u_1, u_2) \|f_1\|_{L^{u_1}(\mathbb{F}^2_q, dx)} \cdot \| f_2\|_{L^{u_2}(\mathbb{F}^2_q, dx)},
\end{equation}
and if it does not exist, we set $L(u_1, u_2)=\infty.$
\end{definition} 

Recall that for $1\leq u \le \infty$, the H\"{o}lder's conjugate of $u$ is defined by 
$\frac{1}{u}+\frac{1}{u^\prime}=1$.

\begin{lemma} \label{lemma:alternative constant}
If  $1\le u_1, u_2 \le \infty$ satisfy  $A(u_2 \to u_1')\lesssim 1$, then  we have $L(u_1, u_2) \lesssim 1.$
\end{lemma}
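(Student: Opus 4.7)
The plan is to interpret this lemma as a short H\"older--duality estimate, using the representation $\mathcal{L}(f_1,f_2) = \langle f_1, A f_2\rangle$ already supplied in the paragraph preceding the statement. The pairing $\langle\cdot,\cdot\rangle$ defined there is precisely the $L^2(\F_q^2, dx)$ inner product with the normalized counting measure, so H\"older's inequality holds in the form $\langle g,h\rangle \le \|g\|_{L^r}\|h\|_{L^{r'}}$ for any H\"older pair $(r,r')$ with no stray factors of $q$. The first step is therefore to apply H\"older with the pair $(u_1, u_1')$ to the decomposition of $\mathcal{L}$, obtaining
\[
\mathcal{L}(f_1,f_2) \;=\; \langle f_1,\, A f_2\rangle \;\le\; \|f_1\|_{L^{u_1}(\F_q^2,dx)}\cdot \|A f_2\|_{L^{u_1'}(\F_q^2,dx)}.
\]

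The second step is to absorb the factor $\|Af_2\|_{L^{u_1'}}$ using the hypothesis $A(u_2\to u_1')\lesssim 1$, which by definition means $\|Af_2\|_{L^{u_1'}(\F_q^2,dx)} \lesssim \|f_2\|_{L^{u_2}(\F_q^2,dx)}$ for every nonnegative $f_2$. Chaining this with the previous display gives
\[
\mathcal{L}(f_1,f_2) \;\lesssim\; \|f_1\|_{L^{u_1}(\F_q^2,dx)}\, \|f_2\|_{L^{u_2}(\F_q^2,dx)},
\]
and then Definition~\ref{definition-L-u-u} yields $L(u_1,u_2)\lesssim 1$, as required.

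There is no substantial obstacle; this lemma is essentially the abstract ``operator bound implies bilinear bound'' duality. The only point requiring care is that $\mathcal{L}$ is not symmetric in its arguments, so one must be deliberate about which factor is paired with the $L^{u_1}$ norm and which is passed through $A$; choosing the H\"older pair $(u_1, u_1')$ on the outside (so that $A f_2$ lands in $L^{u_1'}$) is precisely what makes the hypothesis $A(u_2 \to u_1')\lesssim 1$ applicable. A sanity check on the normalizations (both $\langle\cdot,\cdot\rangle$ and the $L^r$ norms carry the same $q^{-2}$ weighting on $\F_q^2$) confirms that all implicit constants are absolute, and the argument is complete.
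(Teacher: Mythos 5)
Your proof is correct and follows exactly the paper's own argument: Hölder with the pair $(u_1,u_1')$ applied to $\langle f_1, Af_2\rangle$, followed by the operator bound $A(u_2\to u_1')\lesssim 1$. No discrepancies to report.
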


\begin{proof}
By H\"older's inequality and the assumption, we have 
$$ \mathcal{L}(f_1, f_2)=\langle f_1, Af_2 \rangle \le \|f_1\|_{L^{u_1}(\mathbb{F}^2_q, dx)} \,  \|Af_2\|_{L^{u'_1}(\mathbb{F}^2_q, dx)} \lesssim \|f_1\|_{L^{u_1}(\mathbb{F}^2_q, dx)} \,  \|f_2\|_{L^{u_2}(\mathbb{F}^2_q, dx)}. $$
Since these inequalities hold for any $f_1, f_2 \in \mathscr{F}(\mathbb F_q^2)$, the lemma is immediate.
\end{proof}

\begin{theorem}\label{GenRL} 
For $1\le u_1, u_2\le \infty,$  the following holds.
$$ L(u_1, u_2)  \lesssim  1 \quad \mbox{if and only if}\quad \frac{1}{u_1} +  \frac{2}{u_2}\le 2\quad \mathrm{and} \quad  \frac{2}{u_1} +  \frac{1}{u_2}\le 2. $$
\end{theorem}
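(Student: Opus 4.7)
The plan is to prove necessity by testing against carefully chosen pairs of indicator functions, and sufficiency by multilinear Riesz--Thorin interpolation from the extreme points of the admissible region. For necessity, take $f_1 = \mathbf{1}_{\{a\}}$ for any point $a \in \mathbb{F}_q^2$; this forces $\mathcal L(f_1, f_2)$ to pick out the line $\ell_a := \{y : a_1 y_1 + y_2 = a_2\}$. Choosing $f_2 = \mathbf{1}_{\ell_a}$ yields $\mathcal L(f_1, f_2) = q^{-2}$, $\|f_1\|_{u_1} = q^{-2/u_1}$, and $\|f_2\|_{u_2} = q^{-1/u_2}$, so letting $q \to \infty$ forces $\frac{2}{u_1} + \frac{1}{u_2} \le 2$. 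The symmetric configuration (with $f_2$ a point mass and $f_1$ the indicator of the corresponding line $\{x : x_2 = b_1 x_1 + b_2\}$ in the $x$-plane) produces $\frac{1}{u_1} + \frac{2}{u_2} \le 2$.

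\emph{Sufficiency.} The admissible region in the $(1/u_1, 1/u_2)$-plane is a convex quadrilateral with extreme points $(0,0),\ (1,0),\ (2/3,\,2/3),\ (0,1)$, corresponding to $(u_1, u_2) \in \{(\infty,\infty),\,(1,\infty),\,(3/2,3/2),\,(\infty,1)\}$. Since $\mathcal L$ is bilinear, Riesz--Thorin interpolation reduces the problem to verifying $L(u_1, u_2) \lesssim 1$ at these four vertices. The three vertices $(\infty,\infty), (1,\infty), (\infty,1)$ are immediate by direct summation, using that the fiber $\{(x,y): x_1 y_1 + y_2 = x_2\}$ projects $q$-to-one onto each of $\{x\}$ (parametrize by $y_1$) and $\{y\}$ (parametrize by $x_1$, then $x_2 = x_1 y_1 + y_2$); this gives $\mathcal L(f_1, f_2) \le \|f_1\|_\infty\|f_2\|_\infty$, $\|f_1\|_1\|f_2\|_\infty$, and $\|f_1\|_\infty\|f_2\|_1$, respectively.

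\emph{The critical vertex $(3/2, 3/2)$ and the main obstacle.} This endpoint is the crux, and it cannot be reached by interpolation of trivial bounds alone: in the $(1/p, 1/q)$-plane for $A : L^p \to L^q$, the point $(2/3, 1/3)$ needed here lies outside the triangle of trivial vertices $(0,0), (0,1), (1,1)$. By Lemma \ref{lemma:alternative constant} it suffices to prove the $L^p$-improving estimate $A(3/2 \to 3) \lesssim 1$, where $A$ is the normalized averaging over the two-parameter family of non-vertical lines in $\mathbb{F}_q^2$. The plan is to carry out a direct $T^*T$ computation, which yields the identity
\[
A^*A f(y_1, y_2) \;=\; \|f\|_1 \,+\, \tfrac{1}{q}\bigl(f(y_1, y_2) - F(y_1)\bigr), \qquad F(y_1) := \tfrac{1}{q}\sum_{z} f(y_1, z),
\]
exhibiting $A^*A$ as a rank-one projection onto the constants plus an $O(1/q)$ correction. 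Pairing with $f$ gives the $L^2$-energy bound $\|Af\|_2^2 \le \|f\|_1^2 + q^{-1}\|f\|_2^2$. Combined with the pointwise bound $\|Af\|_\infty \le \|f\|_\infty$ and the log-convexity inequality $\|Af\|_3 \le \|Af\|_\infty^{1/3}\|Af\|_2^{2/3}$, one obtains $\|Af\|_3 \lesssim \|f\|_{3/2}$ for indicators $f = \mathbf{1}_E$ by a case split on whether $|E| \le q$ or $|E| \ge q$; in both regimes the three factors align to give exactly $\|f\|_{3/2} = (|E|/q^2)^{2/3}$. A standard real-interpolation argument then transfers the strong bound from indicators to arbitrary nonnegative $f$, completing the endpoint and hence the theorem.
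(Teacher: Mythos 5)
Your overall architecture is the same as the paper's: necessity from the point/line pair of test functions, and sufficiency by reducing to the vertices of the quadrilateral and bilinear interpolation, with the whole weight resting on the single $L^p$-improving endpoint $A(3/2\to 3)\lesssim 1$. The genuine difference is that the paper simply cites Koh's endpoint theorem for the $x$-ray/Radon transform at that vertex, whereas you attempt a self-contained proof. Your $T^*T$ computation is correct: the kernel count gives exactly $A^*Af(y)=\|f\|_1+\tfrac1q\bigl(f(y)-F(y_1)\bigr)$, and since $\langle F,f\rangle=\tfrac1q\sum_{y_1}F(y_1)^2\ge 0$ for $f\ge 0$, the energy bound $\|Af\|_2^2\le\|f\|_1^2+q^{-1}\|f\|_2^2$ follows. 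This is an attractive, elementary route to the \emph{restricted} estimate, and it is not in the paper.

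Two problems remain, one small and one serious. The small one: in the regime $|E|\le q$ the bound $\|Af\|_\infty\le\|f\|_\infty=1$ does not close the estimate, since it only yields $\|A\mathbf 1_E\|_3^3\lesssim |E|q^{-3}$, which exceeds the target $|E|^2q^{-4}$. You need the other trivial pointwise bound $\|Af\|_\infty\le q\|f\|_1$, i.e.\ $\|A\mathbf 1_E\|_\infty\le |E|/q$, in that regime; with $\min(\|f\|_\infty,\,q\|f\|_1)$ both cases do align as you claim. The serious one is the final sentence: there is no ``standard real-interpolation argument'' that upgrades restricted strong type to strong type at $(3/2,3)$, because $(1/p,1/r)=(2/3,1/3)$ is a \emph{vertex} of the type diagram of $A$. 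The usual dyadic decomposition $f=\sum_k 2^{-k}\mathbf 1_{E_k}$ gives $\sum_k(2^{-kp}|E_k|/q^2)^{1/p}$, which dominates $\|f\|_p$ by a factor of the number of levels, and the standard restricted-to-strong upgrades only yield strong type for $p'>3/2$ (or lose a power of $\log q$), neither of which suffices: the theorem, and Corollary \ref{Theorem-L^3/2}, require the exponent $3/2$ exactly. Closing this for general nonnegative $f$ (e.g.\ by a two-height splitting played against your $A^*A$ identity, or by reproducing Koh's direct argument) is precisely the content of the cited endpoint theorem, so as written your proof has a gap at this last step.
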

\begin{proof}
First, assume $L(u_1, u_2) \lesssim 1$.  To obtain  the first inequality $\tfrac{1}{u_1} + \tfrac{2}{u_2} \leq 2$, take two test functions
\[
f_1(x_1, x_2) = \mathbf{1}_{\{x_2=x_1+1\}}, 
\quad 
f_2(x_1, x_2) = \mathbf{1}_{\{x_1=x_2=1\}}.
\]
By simple calculations, we obtain
\[
\mathcal L(f_1, f_2) = q^{-2}; 
\]
\[
\|f_1\|_{L^{u_1}(\mathbb{F}^2_q, dx)} = \Bigl(\frac{q}{q^2}\Bigr)^{1/u_1} = q^{-1/u_1}, 
\qquad 
\|f_2\|_{L^{u_2}(\mathbb{F}^2_q, dx)} = q^{-2/u_2}.
\]
The assumption $L(u_1, u_2) \lesssim 1$ gives
\[
q^{-2} \lesssim q^{-1/u_1 - 2/u_2},
\]
which is equivalent to the inequality $\tfrac{1}{u_1} + \tfrac{2}{u_2} \leq 2$.

The second inequality $\tfrac{2}{u_1} + \tfrac{1}{u_2} \leq 2$ can be obtained in a similar manner by choosing 
\[
g_1(x_1, x_2) = \mathbf{1}_{\{x_1=x_2=1\}}, 
\quad 
g_2(x_1, x_2) = \mathbf{1}_{\{x_2=1-x_1\}}.
\]
For the other direction,  assume that  $1\le u_1, u_2\le \infty$ satisfy the inequalities
$$\frac{1}{u_1} +  \frac{2}{u_2}\le 2 \quad \mathrm{and}  \quad  \frac{2}{u_1} +  \frac{1}{u_2}\le 2.$$ Let us show that $L(u_1, u_2)\lesssim 1$. 
By the interpolation theorem and the nesting property of  the norm, it suffices to establish the estimates on  the critical end-points $(\frac{1}{u_1}, \frac{1}{u_2}) \in [0, 1]\times [0,1]$, which are     
$ (0,1), (1,0),$ and $(\frac{2}{3}, \frac{2}{3})$. In other words,  it remains to prove the following estimates:
\begin{align*}
     L(\infty, 1)   \lesssim   1,   ~L(1, \infty)   \lesssim   1, ~ L\left(\frac{3}{2}, ~\frac{3}{2}\right)   \lesssim   1. 
\end{align*}
But these estimates follow directly from Lemma \ref{lemma:alternative constant}  and the following result,  due to Koh \cite[Theorem 1.1]{koh}:
$$ A(1\to 1)   \lesssim 1,  ~~A(\infty\to \infty)  \lesssim 1,  ~~ A\left(\frac{3}{2} \to 3\right)  \lesssim 1.$$
\end{proof}

The following result is a special case of Theorem \ref{GenRL}, but it is very useful in practice.
\begin{corollary}\label{LamRes} 
We have
$ L\left( \frac{3}{2},  \frac{3}{2}\right)  \lesssim 1.$
\end{corollary}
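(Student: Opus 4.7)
The result is the endpoint case of Theorem~\ref{GenRL} with $u_1=u_2=3/2$, so my plan is to simply specialize: observe that the exponents $u_1=u_2=3/2$ satisfy both linear constraints
\[
\frac{1}{u_1}+\frac{2}{u_2}=\frac{2}{3}+\frac{4}{3}=2 \quad\text{and}\quad \frac{2}{u_1}+\frac{1}{u_2}=2,
\]
so this is precisely the (symmetric) interior critical endpoint identified in the proof of Theorem~\ref{GenRL}. Hence invoking Theorem~\ref{GenRL} yields $L(3/2,3/2)\lesssim 1$ at once.

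Since, however, the corollary is singled out because of its practical utility, I would write the argument in the short self-contained form that bypasses the interpolation step. The plan is to apply Lemma~\ref{lemma:alternative constant} with $u_1=u_2=3/2$: the Hölder conjugate is $u_1'=3$, so the lemma reduces the claim to the single endpoint operator bound
\[
A\!\left(\tfrac{3}{2}\to 3\right)\lesssim 1,
\]
which is exactly the nontrivial endpoint of Koh's theorem \cite[Theorem 1.1]{koh} already cited in the proof of Theorem~\ref{GenRL}. Chaining these two ingredients by Hölder's inequality gives
\[
\mathcal L(f_1,f_2)=\langle f_1,Af_2\rangle \le \|f_1\|_{L^{3/2}(\mathbb F_q^2,dx)}\,\|Af_2\|_{L^{3}(\mathbb F_q^2,dx)} \lesssim \|f_1\|_{L^{3/2}(\mathbb F_q^2,dx)}\,\|f_2\|_{L^{3/2}(\mathbb F_q^2,dx)},
\]
which is the desired inequality.

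There is essentially no obstacle: the work has all been done in Theorem~\ref{GenRL} and Lemma~\ref{lemma:alternative constant}, and the only fact external to this section is Koh's $L^{3/2}\!\to L^3$ estimate for the averaging operator $A$. The only mild point to flag is the bookkeeping around Hölder conjugates, namely that the symmetric diagonal case $u_1=u_2=3/2$ matches Koh's single asymmetric endpoint after passing from $u_1$ to $u_1'=3$; this is why the diagonal estimate does \emph{not} need the remaining two endpoints $A(1\to 1)$ and $A(\infty\to\infty)$ nor the interpolation step used in the full Theorem~\ref{GenRL}.
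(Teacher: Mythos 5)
Your proposal is correct and its first paragraph is exactly the paper's proof: Corollary~\ref{LamRes} is deduced by noting that $u_1=u_2=\tfrac{3}{2}$ satisfies both constraints of Theorem~\ref{GenRL} with equality. The additional self-contained argument via Lemma~\ref{lemma:alternative constant} and Koh's bound $A(\tfrac{3}{2}\to 3)\lesssim 1$ is also valid, but it is just the relevant endpoint step already inside the proof of Theorem~\ref{GenRL}, so it does not constitute a genuinely different route.
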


\begin{proof}
The proof follows immediately from Theorem \ref{GenRL} since 
$$\frac{1}{u_1}+\frac{2}{u_2}=\frac{2}{u_1} +\frac{1}{u_2}= 2 $$
with $u_1=u_2=\frac{3}{2}$.
\end{proof}

Now we can compare two constants $LW(u_1,u_2)$ and $L(u_1, u_2).$
\begin{lemma}\label{remark:L-LW-u-u} For  $1\le u_1, u_2 \le \infty$, the following holds.
     \[L(u_1, u_2)\lesssim 1 \quad \mathrm{if \, \,   and \,  \, only\, \, if}\quad LW(u_1, u_2)\lesssim 1.\]
\end{lemma}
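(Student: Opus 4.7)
The plan is to prove the stronger statement that $LW(u_1,u_2)=L(u_1,u_2)$ as extended real numbers, from which the claimed equivalence is immediate. The key point is that the LW bilinear form and the form $\mathcal L$ agree pointwise on every pair of nonnegative functions; this is exactly what was observed (though not stated as a lemma) in the discussion surrounding \eqref{connection1}.

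I would proceed in two short steps. First, use the bijection $\phi\colon\mathbb F_q^3\to\mathbb F_q^3$ defined by $\phi(x_1,x_2,t):=(x_1,x_2,t+\tfrac12 x_1x_2)$, already introduced in the introduction. From the definitions \eqref{definition-pi-j} of $\pi_1,\pi_2$ and the identifications $\phi_j\colon W_j\to\mathbb F_q^2$, a direct computation gives $f_1(\pi_1(\phi(x_1,x_2,t)))=f_1(x_2,t+x_1x_2)$ and $f_2(\pi_2(\phi(x_1,x_2,t)))=f_2(x_1,t)$. Since $\phi$ is a bijection of $\mathbb F_q^3$, applying this change of variables in the defining sum of the LW form yields
\[
\frac{1}{q^3}\sum_{(x,t)\in\mathbb H^1(\mathbb F_q)}f_1(\pi_1(x,t))f_2(\pi_2(x,t))
=\frac{1}{q^3}\sum_{x_1,x_2,t\in\mathbb F_q}f_1(x_2,t+x_1x_2)f_2(x_1,t).
\]

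Second, introduce new variables $a=(a_1,a_2):=(x_2,t+x_1x_2)$ and $b=(b_1,b_2):=(x_1,t)$. The map $(x_1,x_2,t)\mapsto(a,b)$ is a bijection from $\mathbb F_q^3$ onto the incidence set $\{(a,b)\in(\mathbb F_q^2)^2 : a_1b_1+b_2=a_2\}$, with inverse $(a,b)\mapsto(b_1,a_1,b_2)$. Rewriting the sum in these variables produces precisely $\mathcal L(f_1,f_2)$ in the form \eqref{lineDe}. Combining the two displays gives a pointwise equality of the two bilinear forms on all nonnegative $(f_1,f_2)\in\mathscr F(\mathbb F_q^2)^2$. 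Since the $L^{u_j}$-norms on the right-hand sides of \eqref{LW11} and \eqref{inequality-L} are computed with respect to the same counting measure on $\mathbb F_q^2$, the defining inequalities for $LW(u_1,u_2)$ and $L(u_1,u_2)$ coincide, and therefore $LW(u_1,u_2)=L(u_1,u_2)$.

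There is no genuine obstacle; the only point requiring a moment of attention is that $\phi$ acts only on the domain of summation $\mathbb H^1(\mathbb F_q)$, so it leaves the target $\mathbb F_q^2$ on which the $f_j$'s are defined untouched, and in particular does not alter the $L^{u_j}$-norms on the right.
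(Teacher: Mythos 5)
Your proposal is correct and follows essentially the same route as the paper: the paper's proof also reduces the lemma to the pointwise identity between the LW form and $\mathcal{L}(f_1,f_2)$, obtained via the bijection $\phi$ and the relabeling of variables already carried out in \eqref{connection1}. The only cosmetic difference is that you spell out the change of variables and the bijection onto the incidence set explicitly, whereas the paper cites \eqref{connection1} for it.
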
 
\begin{proof} To prove the lemma, it is sufficient to show the LHS of the inequality \eqref{LW11}  with $n=1$ is equal to LHS, $\mathcal L(f_1,f_2)$, of the inequality \eqref{inequality-L}. 
 In fact, it follows from \eqref{connection1} that 
\begin{align*}
 &\frac{1}{q^3}\sum_{(x_1, x_2, t)\in \mathbb{H}_q^1}
f_1(\pi_1(x_1, x_2, t))f_2(\pi_2(x_1, x_2, t))
    =\frac{1}{q^3}\sum_{x_1, x_2, t\in \mathbb{F}_q}f_1(x_2, t+x_1x_2)f_2(x_1, t)
    =\mathcal{L}(f_1, f_2),
\end{align*}
as desired.
\end{proof}
In conclusion, Theorem \ref{GenRL1}  follows from Theorem \ref{GenRL} and Lemma \ref{remark:L-LW-u-u}



\section{Proof of Theorem \ref{high-dimension-2}}
While proving Theorem \ref{high-dimension-2}, we will heavily use the following result. 
\begin{theorem}\label{theorem-high-dimension}
Fix any $k=1, \ldots, n$. Then for any nonnegative $f_1, \ldots, f_{2n} \in \mathscr{F}(\mathbb F_q^{2n}),$ the inequality
\begin{align}\label{main-inequality}
\nonumber
   & \frac{1}{q^{2n+1}}\sum_{(x,t)\in \mathbb H^n(\mathbb F_q)} \prod_{j=1}^{2n} 
   f_j(\pi_j(x,t)) \\ 
& \lesssim  \|f_{k} \|_{L^{\frac{2n+1}{2}}(\mathbb{F}_q^{2n}, dx)}  \|f_{n+k} \|_{L^{\frac{2n+1}{2}}(\mathbb{F}_q^{2n}, dx)} \prod_{j=1, j \ne k}^{n}\left(  \|f_{j} \|_{L^{2n+1}(\mathbb{F}_q^{2n}, dx)}  \|f_{n+j} \|_{L^{2n+1}(\mathbb{F}_q^{2n}, dx)} \right)
\end{align} holds.  
\end{theorem}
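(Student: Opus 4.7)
The plan is to argue by induction on $n$. The base case $n = 1$, $k = 1$ is exactly Corollary~\ref{Theorem-L^3/2}.

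For the inductive step, I would fix $n \ge 2$ and assume the statement for $\mathbb{H}^{n-1}(\mathbb{F}_q)$. By the automorphism of $\mathbb{H}^n$ permuting its $n$ Heisenberg pairs $\{(x_j,x_{n+j})\}_{j=1}^{n}$, one may assume $k = n$. Decompose $x = (x'', x_n, x_{2n})$ with $x'' = (x_1,\dots,x_{n-1},x_{n+1},\dots,x_{2n-1}) \in \mathbb{F}_q^{2(n-1)}$. The key structural observation I would rely on is twofold: for fixed $x''$, the triple $(x_n, x_{2n}, t)$ inherits an $\mathbb{H}^1$-structure on which $f_n \circ \pi_n$ and $f_{2n} \circ \pi_{2n}$ factor through the planar projections $\pi_1^{\mathbb{H}^1}$ and $\pi_2^{\mathbb{H}^1}$; dually, for fixed $(x_n, x_{2n})$, the tuple $(x'', t)$ inherits an $\mathbb{H}^{n-1}$-structure on which $f_j \circ \pi_j$ for $j \ne n, 2n$ factors through the corresponding $\mathbb{H}^{n-1}$-projection. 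This reflects the factorization of $\mathbb{H}^n$ into two commuting subgroups isomorphic to $\mathbb{H}^1$ and $\mathbb{H}^{n-1}$ sharing the central $t$-coordinate.

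My plan is then: first, fix $x''$ and apply Corollary~\ref{Theorem-L^3/2} in the $(x_n, x_{2n}, t)$ fiber to treat the distinguished pair $(f_n, f_{2n})$, using H\"older's inequality to accommodate the transverse factors $\{f_j \circ \pi_j\}_{j \ne n, 2n}$; second, integrate the resulting slice-wise bound over $x''$, invoking the $\mathbb{H}^{n-1}$ inductive hypothesis (or its symmetric consequence via multilinear interpolation over $k' \in \{1, \ldots, n-1\}$) to convert the slice-wise norms into the full $L^p$-norms of the $f_j$ on $\mathbb{F}_q^{2n}$.

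The hard part will be reconciling the H\"older exponents across these two stages. A pure outer H\"older would force $\sum_{j=1}^{2n} 1/u_j = 1$, but the target endpoint in \eqref{main-inequality} satisfies
\[
\sum_{j=1}^{2n}\frac{1}{u_j} \;=\; \frac{4}{2n+1}+\frac{2(n-1)}{2n+1}\;=\;\frac{2n+2}{2n+1},
\]
exceeding $1$ by precisely $\frac{1}{2n+1}$. This surplus should be supplied by combining the planar endpoint gain from Corollary~\ref{Theorem-L^3/2} in the $\mathbb{H}^1$-fiber (where $\frac{1}{3/2}+\frac{1}{3/2}=\frac{4}{3}$ exceeds $1$ by $\frac{1}{3}$) with the inductive gain $\frac{1}{2n-1}$ available on the $\mathbb{H}^{n-1}$-slice; a careful choice of H\"older exponents should arrange the two gains to compose along the fiber structure to exactly $\frac{1}{2n+1}$. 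Executing this matching, and verifying that the slice-wise norms aggregate correctly to the full norms on $\mathbb{F}_q^{2n}$ via Fubini on the decomposition $\mathbb{F}_q^{2n+1} = \mathbb{F}_q^{2(n-1)}\times \mathbb{F}_q^{2}\times \mathbb{F}_q$, will be the technical heart of the argument.
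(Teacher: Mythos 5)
Your decomposition and exponent bookkeeping are sound, but your route is genuinely different from the paper's, and as written it has a gap in the second stage. For contrast: the paper never applies the planar estimate ``in a fiber.'' After the changes of variables $\tau=t-\tfrac12 x_n x_{2n}$ and $s=\tau+x_n x_{2n}$, it peels off a \emph{non-distinguished} pair $(f_n,f_{2n})$ losslessly using only H\"older at $(2n+1,\tfrac{2n+1}{2n})$ and Minkowski, and then feeds the remaining $2n-2$ factors --- \emph{including} the distinguished pair --- into the $\mathbb{H}^{n-1}$ inductive hypothesis, which still carries the central variable $t$; the entire gain $\tfrac{1}{2n+1}$ is generated only at the bottom of the induction by Corollary~\ref{Theorem-L^3/2}. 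Your plan inverts this: the gain is produced immediately by the planar estimate on the distinguished pair in the $(x_n,x_{2n},t)$-fiber, so the transverse factors must afterwards be aggregated losslessly.

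The gap is that your two stages both want to consume the same $t$-sum: Corollary~\ref{Theorem-L^3/2} in the fiber is a statement about $\tfrac1{q^3}\sum_{(x_n,x_{2n},t)}$, and the $\mathbb{H}^{n-1}$ hypothesis on $(x'',t)$ is a statement about $\tfrac1{q^{2n-1}}\sum_{(x'',t)}$; once Stage~1 has been executed, $t$ has been integrated out and there is no Heisenberg form left on $(x'',t)$ to which the inductive hypothesis could apply. Acknowledging the shared central coordinate does not resolve this. The correct Stage-2 tool is \emph{abelian}, not Heisenberg: applying H\"older in the fiber with exponents $\bigl(\tfrac{2n+1}{2n-2},\tfrac{2n+1}{3}\bigr)$ and then Corollary~\ref{Theorem-L^3/2} to the $\tfrac{2n+1}{3}$-th powers of the slices of $f_n,f_{2n}$ leaves, besides the two $L^{(2n+1)/2}$ slice norms, the factors $A_j(x'')^{1/(2n+1)}$ with $A_j(x''):=\tfrac1{q^3}\sum_{x_n,x_{2n},t}f_j(\pi_j(x,t))^{2n+1}$, and after the shift in $t$ each $A_j$ is independent of the coordinate $x_j$ of $x''$. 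H\"older over $x''$ at $\bigl(\tfrac{2n+1}{2},\tfrac{2n+1}{2},\tfrac{2n+1}{2n-3}\bigr)$ recovers $\|f_n\|_{(2n+1)/2}\|f_{2n}\|_{(2n+1)/2}$, and the remaining term $\bigl(\tfrac1{q^{2n-2}}\sum_{x''}\prod_j A_j^{1/(2n-3)}\bigr)^{(2n-3)/(2n+1)}$ is bounded by $\prod_{j\ne n,2n}\|f_j\|_{2n+1}$ by the classical functional Loomis--Whitney inequality on $\mathbb{F}_q^{2n-2}$ (the functional form of \eqref{eq:111}), not by the $\mathbb{H}^{n-1}$ hypothesis. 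With that substitution your plan closes --- and in fact requires no induction on $n$ beyond the base case --- but the step as you named it would fail.
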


\begin{proof}

We may assume, WLOG, that $k=1$. 
Denote
\[
I:=\frac{1}{q^{2n+1}}\sum_{(x,t)\in \mathbb H^n(\mathbb F_q)}
\prod_{j=1}^{2n} f_j(\pi_j(x,t)).
\]

{\bf Step $1^{\circ}$ (Separate the factor $f_{n}$.)} 
For convenience, for $x=(x_1, \ldots, x_{2n}) \in \mathbb F_q^{2n}$, write $x=(x',y)$, where $x'=(x_1, \ldots, x_{2n-1})\in\mathbb{F}_q^{2n-1}$ and $y=x_{2n} \in \mathbb F_q$, and the change of variables $\tau:=t-\frac{1}{2}x_n y$.  Then we can write
\begin{align}\label{definition-I} 
\nonumber
I
&=\frac{1}{q^{2n+1}}\sum_{(x',y,t) \in \mathbb{H}_q^{n}}
f_{2n}(x',t -\frac{1}{2}x_n y)
f_{n}(\widehat{x}_{n},t+\frac{1}{2}x_n y)\,  
\prod_{\substack{j=1\\ j\neq n,2n}}^{2n}
f_j\big(\pi_j(x',y,t)\big)
\\ \nonumber
&=\frac{1}{q^{2n+1}} \sum_{(x', y, \tau) \in \mathbb{F}_q^{2n+1}}
f_{2n}(x', \tau)\,  f_{n}(\widehat{x}_{n}, \tau+x_n y) 
\prod_{\substack{j=1\\ j\neq n,2n}}^{2n}
f_j\big(\pi_j(x',y,\tau+\frac{1}{2}x_n y)\big)\\
&=\frac{1}{q^{2n}} \sum_{(x',\tau) \in \mathbb{F}_q^{2n}}
f_{2n}(x', \tau)\, \frac{1}{q} \sum_{y \in \mathbb{F}_q}  f_{n}(\widehat{x}_{n}, \tau+x_n y) 
\prod_{\substack{j=1\\ j\neq n,2n}}^{2n}
f_j\big(\pi_j(x',y,\tau+\frac{1}{2}x_n y)\big).
\end{align} 
Note that $f_{2n}$ is independent of $y$. By applying H\"{o}lder's inequality with the conjugate pair of exponents $(2n+1, \frac{2n+1}{2n})$, 
we obtain 
\begin{align*}
I \leq &\bigg( \frac{1}{q^{2n}} \sum_{(x', \tau) \in \mathbb{F}_q^{2n}}
f_{2n}(x', \tau)^{2n+1}  \bigg)^{\frac{1}{2n+1}} \cdot J 
= \| f_{2n} \|_{L^{2n+1}(\mathbb{F}_q^{2n}, dx)} \cdot J,
\end{align*}
where $J$ captures the contribution of the remaining product 
\[
J:=\left[ \frac{1}{q^{2n}}\sum_{(x',\tau) \in \mathbb{F}_q^{2n}} \bigg( \frac{1}{q}\sum_{y \in \mathbb{F}_q}
 f_{n}(\widehat{x}_n, \tau+x_n y) 
\prod_{\substack{j=1\\ j\neq n,2n}}^{2n}
f_j\big(\pi_j(x',y,\tau+\frac{1}{2}x_n y) \big)  \bigg)^{\frac{2n+1}{2n}} \right]^{\frac{2n}{2n+1}}.
\]
 To prove \eqref{main-inequality}, it is enough to show that
\begin{align*}
 J \lesssim  \| f_{1} \| _{L^{\frac{2n+1}{2}}(\mathbb{F}_q^{2n}, dx)}  \| f_{n+1} \| _{L^{\frac{2n+1}{2}}(\mathbb{F}_q^{2n}, dx)} 
  \| f_{n} \| _{L^{2n+1}(\mathbb{F}_q^{2n}, dx)} \prod_{j=2}^{n-1}\left(  \| f_{j} \| _{L^{2n+1}(\mathbb{F}_q^{2n}, dx)}  \| f_{n+j} \| _{L^{2n+1}(\mathbb{F}_q^{2n}, dx)} \right).
\end{align*}

{\bf Step $2^{\circ}$ (Extract the $f_n$-factor).}
For fixed $y$ and $x_n$, perform the change of variables $s:=\tau+x_n y$.
For $x = (x_1, \ldots, x_{2n}) \in \mathbb{F}_q^{2n}$, we use the notation $\widehat{x}_{i,j}$ to denote the element in $\mathbb{F}_q^{2n-2}$ obtained by removing $x_i$ and $x_j$ from $x$. 
By Minkowski inequality, we obtain 
\begin{align*}
    J=& \left[ \frac{1}{q^{2n-1}}\sum_{x' \in \mathbb{F}_q^{2n-1}} \frac{1}{q} \sum_{s \in \mathbb{F}_q} \bigg( \frac{1}{q}\sum_{y \in \mathbb{F}_q}
 f_{n}(\widehat{x}_n, s) 
\prod_{\substack{j=1\\ j\neq n,2n}}^{2n}
f_j\big(\pi_j(x',y, s-\frac{1}{2}x_n y) \big) \bigg)^{\frac{2n+1}{2n}}  \right]^{\frac{2n}{2n+1}} \\
    \leq & \frac{1}{q}\sum_{y \in \mathbb{F}_q} \bigg[ \frac{1}{q^{2n-1}}\sum_{x' \in \mathbb{F}^{2n-1}_q }   \frac{1}{q}\sum_{s\in \mathbb{F}_q} f_n(\widehat{x}_n,s)^{\frac{2n+1}{2n}}\prod_{\substack{j\neq n,2n}} f_j\big(\pi_j(x',y, s -\frac{1}{2} x_n y)\big)^{\frac{2n+1}{2n}} \bigg]^{\frac{2n}{2n+1}} \\
    =& \frac{1}{q}\sum_{y \in \mathbb{F}_q} \bigg[ \frac{1}{q^{2n-1}}\sum_{(\widehat{x}_{n, 2n},s) \in \mathbb{F}^{2n-1}_q }  f_n(\widehat{x}_n,s)^{\frac{2n+1}{2n}}  \bigg(  \frac{1}{q}\sum_{x_n\in \mathbb{F}_q}\prod_{\substack{j\neq n,2n}} f_j\big(\pi_j(x',y, s -\frac{1}{2} x_n y)\big)^{\frac{2n+1}{2n}} \bigg) \bigg]^{\frac{2n}{2n+1}}
\end{align*}
where in the last equality we have used the fact that $f_n$-term is independent of $x_n$. By applying H\"{o}lder inequality with the conjugate pair of exponents $(2n, \frac{2n}{2n-1})$, 
we obtain 
\begin{align*}
    J \leq \frac{1}{q} \sum_{y \in \mathbb{F}_q} J_1(y) J_2(y)
\end{align*}
where 
\begin{align*}
    J_1(y)=& \bigg(\frac{1}{q^{2n-1}}\sum_{(\widehat{x}_{n, 2n},s) \in \mathbb{F}^{2n-1}_q } f_n(\widehat{x}_n,s)^{2n+1} \bigg)^{\frac{1}{2n+1}},\\
    J_2(y)=& \bigg(\frac{1}{q^{2n-1}}\sum_{(\widehat{x}_{n, 2n},s) \in \mathbb{F}_q^{2n-1} } \bigg(\frac{1}{q} \sum_{x_n \in \mathbb{F}_q} \prod_{\substack{j\neq n,2n}} f_j\big(\pi_j(x',y, s-\frac{1}{2} x_n y)^{\frac{2n+1}{2n}} \bigg)^{\frac{2n}{2n-1}} \bigg)^{\frac{2n-1}{2n+1}}
\end{align*}
Applying H\"{o}lder inequality  in the outer $y$--sum with the conjugate pair of  exponents $\left( 2n+1,\frac{2n+1}{2n} \right)$ yields
\begin{align*}
    J \lesssim &  \Big(\frac{1}{q}\sum_{y} J_1(y)^{2n+1}\Big)^{\frac{1}{2n+1}} \cdot
\Bigg(\frac{1}{q}\sum_{y\in \mathbb{F}_q} J_2(y)^{\frac{2n+1}{2n}}\Bigg)^{\frac{2n}{2n+1}} \\
=&  \bigg(\frac{1}{q^{2n}}\sum_{(\widehat{x}_{n},s) \in \mathbb{F}^{2n}_q } f_n(\widehat{x}_n,s)^{2n+1} \bigg)^{\frac{1}{2n+1}}\cdot J_{\Pi}\\
=&  \|f_n\|_{L^{2n+1}(\mathbb{F}_q^{2n}, dx)} \cdot J_{\Pi}.
\end{align*}
Therefore, it remains to bound
\[
J_\Pi := \Bigg(\frac{1}{q}\sum_{y\in \mathbb{F}_q} J_2(y)^{\frac{2n+1}{2n}}\Bigg)^{\frac{2n}{2n+1}}
\]
by the product of the remaining norms appearing on the right-hand side of \eqref{main-inequality}.

{\bf Step $3^{\circ}$ (Apply the induction hypothesis).}

To pass from dimension \(n\) to \(n-1\), we isolate the last pair of coordinates \((x_n,x_{2n})\) and treat it as a parameter. For each fixed choice of \((x_n,x_{2n})\in\F_q^2\), the remaining variables (together with the \(t\)-variable after the changes made in Steps \(1^\circ\)–\(2^\circ\)) carry the same Heisenberg structure as \(\mathbb H^{\,n-1}(\F_q)\). With this preparation, the bracketed expression in \eqref{inequality-J-pi} can be viewed as an \((n-1)\)--dimensional Loomis--Whitney type form in the remaining variables, applied to auxiliary factors obtained from the original \(f_j\)'s by freezing \((x_n,x_{2n})\). This allows us to invoke the induction hypothesis and obtain \eqref{inequality-J-pi-2}. The specific power \(\frac{2n+1}{2n-1}\) is chosen so that the norms produced by the \((n-1)\)--dimensional estimate translate, after averaging over \((x_n,x_{2n})\), into the desired \(L^{2n+1}\) and \(L^{\frac{2n+1}{2}}\) norms of the original functions.

By applying Minkowski's inequality, we obtain
\begin{align}\label{inequality-J-pi}
\nonumber
J_{\Pi}=& \Bigg(\frac{1}{q}\sum_{y\in \mathbb{F}_q} J_2(y)^{\frac{2n+1}{2n}}\Bigg)^{\frac{2n}{2n+1}}\\
\lesssim &\left(\frac{1}{q^2}   \sum_{x_{n}, y \in \mathbb{F}_q} \left[ \frac{1}{q^{2n-1}} \sum_{(\widehat{x}_{n, 2n},  t) \in \mathbb{F}_q^{2n-1}}  
\prod_{\substack{j\neq n,2n}}
f_{j}(\pi_{j}(x',y, t))^{\frac{2n+1}{2n-1}}    
 \right]^{\frac{2n-1}{2n}}\right)^{\frac{2n}{2n+1}}.
\end{align}
Observing that the expression inside the square brackets of \eqref{inequality-J-pi} is a product of $2n-2$ functions, we aim to transform it into a form suitable for applying the induction hypothesis to establish an upper bound for $J_{\Pi}$. 
We recall that 
\begin{equation}\label{definition-f-j}
f_{j}(\pi_{j}(x',y,\tau))=
\begin{cases}
f_{j}(\check{x}_{j}, \tau+\frac{1}{2}x_{j}x_{j+n}) , ~~~~~~~~~~~~~ \text{ if } 1 \leq j \leq n-1,\\
f_{j}(\check{x}_{n+j}, \tau-\frac{1}{2}x_{j-n}x_{j}) , ~~~\text{ if } n+1 \leq j \leq 2n-1.
\end{cases}
\end{equation}
We temporarily represent points in $\mathbb{H}^{n-1}(\mathbb F_q)$ using coordinates $(u, t)=(u_1, \ldots, u_{2n-2}, t)$ where $u \in \mathbb{F}_{q}^{2n-2}$. 
 Fixing $x_{n}, y\in \mathbb{F}_{q}$ we define the functions $g_{j}$, $j \in \{1, \ldots, 2n-2\}$, on $\mathbb{F}_{q}^{2n-2}$ by 
\begin{equation*}
    g_j(\widehat{u}_j,t) =
    \begin{cases}
            f_j(\tilde{u}_j)^{\tfrac{2n+1}{2n-1}}, ~~~~~~~~~~ 1 \leq j \leq n-1,\\
            f_{j+1}(\tilde{u}_{j})^{\tfrac{2n+1}{2n-1}}, ~~~~~n \leq j \leq 2n-2,
    \end{cases}
\end{equation*}
where \( \tilde{u}_j \) is obtained from 
\[
(u_1, u_2, \ldots, u_{n-1}, x_n, u_{n}, \ldots, u_{2n-2}, y, t)
\]
by removing \( u_j \). More precisely 
\begin{equation}\label{definition-g_n-2n-j-1}
g_{j}(\widehat{u}_{j},t)=
\begin{cases}
f_{1}(u_2, \ldots, u_{n-1}, x_{n}, u_{n}, \ldots, u_{2n-2}, y, t)^{\frac{2n+1}{2n-1}} , ~~~~~~~~~~~~~~~~~~~~~~~~~~~~~~~~~~j=1,\\
f_{j}(u_1, \ldots, u_{j-1}, u_{j+1}, \ldots, u_{n-1}, x_{n}, u_{n}, \ldots, u_{2n-2}, y, t)^{\frac{2n+1}{2n-1}} , ~~~2 \leq j \leq n-2,\\
f_{n-1}(u_1, \ldots, u_{n-2}, x_{n}, u_{n}, \ldots, u_{2n-2}, y, t)^{\frac{2n+1}{2n-1}} , ~~~~~~~~~~~~~~~~~~~~~~~~~ j=n-1,
\end{cases}
\end{equation}
and 
\begin{equation}\label{definition-g_n-2n-j-2}
g_{j}(\widehat{u}_{j},t)=
\begin{cases}
f_{n+1}(u_1, \ldots, u_{n-1}, x_{n}, u_{n+1}, \ldots, u_{2n-2}, y, t)^{\frac{2n+1}{2n-1}} , ~~~~~~~~~~~~~~~~~~~~~~~~~ j=n,\\
f_{j}(u_1, \ldots,  u_{n-1}, x_{n}, u_{n}, \ldots, u_{j-1}, u_{j+1}, \ldots, u_{2n-2}, y, t)^{\frac{2n+1}{2n-1}}, ~n+1 \leq j \leq 2n-3,\\
f_{2n-1}(u_1, \ldots, u_{n-2}, x_{n}, u_{n}, \ldots, u_{2n-3}, y, t)^{\frac{2n+1}{2n-1}} , ~~~~~~~~~~~~~~~~~~~~~~~~~ j=2n-2.
\end{cases}
\end{equation}
With this notation in place, and recalling \eqref{definition-f-j}, we can restate \eqref{inequality-J-pi} equivalently as follows
\begin{align*}
\nonumber
J_{\Pi} \lesssim &\left(\frac{1}{q^2} \sum_{x_{n},y\in \mathbb{F}_q} 
\left[\frac{1}{q^{2n-1}} \sum_{(u, t)\in \mathbb{F}_q^{2n-1}} \prod_{\substack{1 \leq j \leq 2n-2}} g_{j}(\pi_{j}(u, t))    \right]^{\frac{2n-1}{2n}}
\right)^{\frac{2n}{2n+1}} 
\end{align*}
where we recall that $\pi_{j}$ is the projection from $\mathbb{H}^{n-1}(\mathbb F_q)$ to the vertical plane $W_j$ (identified with $ \mathbb F_{q}^{2n-2}$), defined in \eqref{definition-pi-j}. 
By the induction hypothesis we have
\begin{align}\label{inequality-J-pi-2}
\nonumber
\frac{1}{q^{2n-1}} &\sum_{(u,t) \in \mathbb{F}_q^{2n-1}} \prod_{\substack{1 \leq j \leq 2n-2}} g_j(\pi_j(u,t)) \\
\;\lesssim\; 
&
\|g_1\|_{L^{\frac{2n-1}{2}}(\mathbb{F}_q^{2n-2}, dx)}\,
\|g_{n}\|_{L^{\frac{2n-1}{2}}(\mathbb{F}_q^{2n-2}, dx)}\,
\prod_{\substack{1 \leq j \leq 2n-2,\\ j\ne 1,n}} \|g_j\|_{L^{2n-1}(\mathbb{F}_q^{2n-2}, dx)}. 
\end{align}
Next we apply the multilinear H\"{o}lder's inequality with exponents 
$\ell_1=\ell_n=n$ and $\ell_j=2n$, $\forall\, j \ne 1, n$.  Note that 
$
\sum\limits_{j=1}^{2n-2} \frac{1}{\ell_{j}}=1
$.
Hence, from \eqref{inequality-J-pi-2}, we have 
\begin{align*}
J_{\Pi} \lesssim &\bigg[\bigg( \frac{1}{q^2}  \sum_{x_{n}, y\in \mathbb{F}_q}  \| g_{1}  \|_{L^{\frac{2n-1}{2}}(\mathbb{F}_q^{2n-2}, dx)}^{\frac{2n-1}{2}} \bigg)^{\frac{1}{n}}  \cdot 
\bigg( \frac{1}{q^2} \sum_{x_{n}, y\in \mathbb{F}_q}   \| g_{n}  \|_{L^{\frac{2n-1}{2}}(\mathbb{F}_q^{2n-2}, dx)}^{\frac{2n-1}{2}} \bigg)^{\frac{1}{n}} \\
& \cdot 
\prod_{\substack{1 \leq j \leq 2n-2,\\ j\ne 1,n}} 
\bigg( \frac{1}{q^2}  \sum_{x_{n}, y\in \mathbb{F}_q}  \| g_{j}  \|_{L^{2n-1}(\mathbb{F}_q^{2n-2}, dx)}^{2n-1}\bigg)^{\frac{1}{2n}} \bigg]^{\frac{2n}{2n+1}}\\
=&  \|f_{1} \|_{L^{\frac{2n+1}{2}}(\mathbb{F}_q^{2n}, dx)}  \|f_{n+1} \|_{L^{\frac{2n+1}{2}}(\mathbb{F}_q^{2n}, dx)} 
 \prod_{j=2}^{n-1}\left(  \|f_{j} \|_{L^{2n+1}(\mathbb{F}_q^{2n}, dx)}  \|f_{n+j} \|_{L^{2n+1}(\mathbb{F}_q^{2n}, dx)} \right).
\end{align*} 
Here the last equality follows from the definition of $g_j$ for $j \in \{1, \ldots, 2n-2\}$, given in \eqref{definition-g_n-2n-j-1} and \eqref{definition-g_n-2n-j-2}.

\end{proof}

For a positive $n$, we define a multilinear map \[T: \mathscr{F}(\mathbb F_q^{2n})\times \cdots \times \mathscr{F}(\mathbb F_q^{2n}) \rightarrow \mathscr{F}(\mathbb F_q^{2n})\] as follows: For an element $(z,\tau)=(z_1,\ldots,z_{2n-1},\tau)\in \mathbb F_q^{2n}$
\begin{align*}
T(g_1, \ldots, g_{2n-1})(z, \tau):=\frac{1}{q}
 \sum_{s \in \mathbb F_q} 
\left[g_n \big(z, \tau+ z_{n}s \big) \cdot \prod_{\substack{ j=1, j \ne n}}^{2n-1} g_{j} \left(\pi_j \big(z, s, \tau+\frac{1}{2}z_n s \big) \right)\right].
\end{align*}
We use 
this operator $T$ to find an  upper bound of  $I$,
given in \eqref{definition-I}.  More precisely,
taking a change of variable $\tau =t+\frac{1}{2}x_n x_{2n}$, and applying H\"{o}lder's inequality with the conjugate pair of exponents
\begin{align}\label{dual-exponent}
(\ell, \ell^\prime)=\bigg(\frac{n(2n+1)}{n+1},\frac{n(2n+1)}{2n^2-1} \bigg),
\end{align}
we obtain 
\begin{align}\label{estimate-1}
\nonumber
I=&\frac{1}{q^{2n+1}}
\sum_{(x,t)\in 
\mathbb H^n(\mathbb F_q)}
\prod_{j=1}^{2n} f_j(\pi_j(x,t)) \\
\nonumber
=&\frac{1}{q^{2n}}\sum_{(x',t) \in \mathbb{F}_q^{2n}} \sum_{x_{2n}  \in \mathbb{F}_q}
f_{2n}(x',t-\frac{1}{2}x_n x_{2n})\,
\Bigg(\frac{1}{q} f_n(\widehat{x}_{n},t + \frac{1}{2} x_n x_{2n})
\prod_{\substack{j\neq n,2n}}
f_j\big(\pi_j(x,t)\big)\Bigg)\\ \nonumber
=&\frac{1}{q^{2n}}\sum_{(x', \tau) \in \mathbb{F}_q^{2n}} 
f_{2n}(x', \tau)\,
\Bigg(\frac{1}{q} \sum_{x_{2n}  \in \mathbb{F}_q} f_n(\widehat{x}_{n}, \tau - \frac{1}{2} x_n x_{2n})
\prod_{\substack{j\neq n,2n}}
f_j\big(\pi_j(x', x_{2n},\tau+\frac{1}{2}x_n x_{2n})\big)\Bigg)\\
\leq & \| T(f_1, \ldots, f_{2n-1})\|_{L^{\ell'}(\mathbb{F}^{2n}_q, dx) }  \cdot 
\| f_{2n}\|_{ L^{\ell} (\mathbb{F}^{2n}_q, dx) }.
\end{align}

We are now ready to prove Theorem \ref{high-dimension-2}.
\begin{proof}[Proof of Theorem \ref{high-dimension-2}]
In view of \eqref{estimate-1}, the proof of Theorem \ref{high-dimension-2} is reduced to proving 
\begin{align}\label{estimate-2}
 \| T(f_1, \ldots, f_{2n-1})\|_{L^{\ell^\prime}(\mathbb{F}^{2n}_q, dx) } 
 \lesssim
 \prod_{j=1}^{2n-1} \| f_{j}\|_{L^{\ell_j}(\mathbb{F}^{2n}_q, dx) } 
\end{align} for all 
nonnegative
$f_1, \ldots, f_{2n-1}\in \mathscr{\mathbb F_q^{2n}}$,
where $\ell^\prime$ is defined in \eqref{dual-exponent} and $\ell_1=\ldots=\ell_{2n-1}=\frac{n(2n+1)}{n+1}$.
To prove the inequality \eqref{estimate-2}, 
first, from \eqref{main-inequality} we obtain
\begin{align}\label{estimate-3}
 \| T(f_1, \ldots, f_{2n-1})\|_{L^{\ell_k}(\mathbb{F}^{2n}_q, dx) } \lesssim \prod_{j=1}^{2n-1} \| f_{j}\|_{L^{\ell_{j,k}}(\mathbb{F}^{2n}_q, dx)}, ~~~k=1, \ldots, n
\end{align} 
where $\ell_{k}$, $\ell_{j,k}$ are given by 
\begin{equation}\label{definition-q_k}
\ell_k=
\begin{cases}
\frac{2n+1}{2n} , ~~~~~~~~~~k=1, \ldots, n-1,\\
\frac{2n+1}{2n-1} , ~~~~~~~~~~k=n
\end{cases}
\end{equation}
and 
\begin{equation}\label{definition-p-j_k}
\ell_{j, k}=
\begin{cases}
2n+1 , ~~~~~~~~~~~~~~k \notin \{j, j+n, j-n\},\\
\frac{2n+1}{2} , ~~~~~~~~~~~~~~~k \in \{j, j+n, j-n\},\\
\end{cases}
\end{equation}
$j =\{1, \ldots, 2n-1\}$, $k=1, \ldots, n$.

Note that the exponents in \eqref{definition-q_k} and \eqref{definition-p-j_k} satisfy
\begin{align*}
    \frac{1}{\ell^\prime}=\frac{2n^2-1}{n(2n+1)}=\frac{1}{n} \bigg[ \frac{2n}{2n+1} \cdot (n-1)+\frac{2n-1}{2n} \bigg]=\sum\limits_{k=1}^{n} \frac{1}{n} \frac{1}{\ell_k}
\end{align*}
and
\begin{align*}
    \frac{1}{\ell_j}=\sum\limits_{k=1}^{n} \frac{1}{n} \frac{1}{\ell_{j,k}}, ~~~~~~~~j=1, \ldots, 2n-1.
\end{align*}
To prove \eqref{estimate-2}, we apply the discrete version of the multilinear interpolation theorem, 
a generalization to \cite[Proposition 6.1]{kohpham}. 
Since $(\mathbb{F}_q^{2n}, dx)$ is a finite measure space equipped with the normalized counting measure, 
the same interpolation argument carries over verbatim. 
We then perform $(n-1)$ iterations with $\theta = \tfrac{1}{k}$, starting from $k=2$ and ending at $k=n$, 
using the identity
\[
\frac{1}{k}[a_1+\cdots+a_k] 
= \left(1-\frac{1}{k}\right)\left(\frac{1}{k-1}[a_1+\cdots+a_{k-1}] + \frac{1}{k}a_k \right).
\]
More precisely, we first apply multilinear interpolation theorem with $\theta=\frac{1}{2}$ to the two operator bounds given by \eqref{estimate-3} for $k=1$ and $k=2$. Next, we apply the multilinear interpolation theorem 
with $\theta=\frac{1}{3}$ to interpolate between the newly obtained bound and the operator bound stated in \eqref{estimate-3} for $k=3$. We continue this process until, in the final step, we apply the theorem with $\theta=\frac{1}{n}$ to interpolate between the previously obtained bound and the bound for $k=n$. From this, we obtain \eqref{estimate-2} for all nonnegative 
$f_1, \ldots, f_{2n} \in \mathscr{F}(\mathbb{F}^{2n}_q)$.

\end{proof}
\section{Proof of Corollary \ref{Loomis-Whitney-inequality-F_p^q}}

\begin{proof}
For  subsets \(K_1, \ldots, K_{2n} \subset \mathbb{F}_q^{2n}\), denote \[K:=\bigcap\limits_{j=1}^{2n} \pi_j^{-1}(K_j),\]
where each  $K_j$ is viewed as a subset of $\mathbb H^n(\mathbb F_q)$ through the identification  $\mathbb F_q^{2n}=W_j \subset \mathbb H^n(\mathbb F_q)$ for $j\in \{1, \ldots 2n\}$.
Then for $(x,t)\in \mathbb H^n(\mathbb F_q),$ we can write
\[
\mathbf{1}_{ K}(x,t) 
= \prod_{j=1}^{2n} \mathbf{1}_{K_j}(\pi_j(x,t)).
\]
Then, to prove Corollary \ref{Loomis-Whitney-inequality-F_p^q}, it is sufficient to prove the inequality
\begin{align}\label{inequality-theorem1.9}
\frac{1}{q^{2n+1}} \sum_{(x,t) \in \mathbb{H}^n (\mathbb{F}_q)}
\prod_{j=1}^{2n} \mathbf{1}_{K_j}(\pi_j(x,t))
\;\;\lesssim\;\; 
\prod_{j=1}^{2n} \|\mathbf{1}_{K_j}\|_{L^{\frac{n(2n+1)}{n+1}}(\mathbb{F}_q^{2n}, dx)}.
\end{align}
Note that
\[
\|\mathbf{1}_{K_j}\|_{L^p} = \bigg( \frac{|K_j|}{q^{2n}}\bigg)^{1/p}, 
\qquad j=1,\dots,2n.
\]
Finally, applying Theorem \ref{high-dimension-2} with 
\(f_j = \mathbf{1}_{K_j}\) for $j\in \{1, \ldots, 2n\}$, multiply both sides by $q^{2n+1}$ and simplify the power of $q$ to obtain \eqref{inequality-theorem1.9}, as desired.
\end{proof}


\section{Proof of Theorem \ref{Theorem-L-W-ineq:q-q^1/2}}
Let $ \mathscr P$ be a set of points in $\mathbb{F}_q^2$ and $\mathscr L$ be a set of lines in $\mathbb{F}_q^2$. The number of incidences between $ \mathscr P$ and $\mathscr L$, denoted by $I( \mathscr P, \mathscr L)$, is defined by 
\[I(\mathscr P, \mathscr L)=|\{(p, \ell)\in \mathscr P\times \mathscr L\colon p\in \ell\}|.\]
To prove Theorem \ref{Theorem-L-W-ineq:q-q^1/2}, we invoke
 the following incidence bound due to Vinh \cite{Vinh}.


\begin{theorem}\label{big}
    Let $ \mathscr P$ be a set of points in $\mathbb{F}_q^2$ and $\mathscr L$  a set of lines in $\mathbb{F}_q^2$. The number of incidences between $ \mathscr P$ and $\mathscr L$ satisfies
    \[I(\mathscr P, \mathscr L) \le  \frac{|\mathscr P| \, |\mathscr L|}{q}+2q^{\frac{1}{2}}\sqrt{|\mathscr P| \, | \mathscr L|}.\]
\end{theorem}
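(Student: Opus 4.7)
The plan is to prove the bound via a spectral (expander–mixing) argument applied to the full bipartite point–line incidence graph of the affine plane $\mathbb{F}_q^2$. Let $\mathcal{P}_0=\mathbb{F}_q^2$ be the set of all points (so $|\mathcal{P}_0|=q^2$) and $\mathcal{L}_0$ the set of all affine lines in $\mathbb{F}_q^2$, so $|\mathcal{L}_0|=q^2+q$. Let $M\in\{0,1\}^{q^2\times(q^2+q)}$ be the incidence matrix with $M_{p,\ell}=1$ iff $p\in\ell$. Then $I(\mathscr{P},\mathscr{L})=\mathbf{1}_{\mathscr{P}}^{T}M\mathbf{1}_{\mathscr{L}}$, and the whole task reduces to estimating this bilinear form.

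The crucial spectral input is the identity
\[
MM^{T}=qI+J \qquad\text{on }\mathbb{R}^{q^2},
\]
where $J$ is the all-ones matrix. Indeed, the diagonal entries of $MM^T$ equal $q+1$ (the number of lines through a fixed point), while the off-diagonal entries equal $1$ (since two distinct points lie on a unique common line). From this one reads off that $M\mathbf{1}=(q+1)\mathbf{1}$, that $\mathbf{1}^{T}M=q\mathbf{1}^{T}$, and that for every $u\in\mathbb{R}^{q^2}$ with $u\perp\mathbf{1}$,
\[
\|M^{T}u\|_2^2 = u^{T}(MM^{T})u = q\,\|u\|_2^2.
\]

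Now decompose $\mathbf{1}_{\mathscr{P}}=\alpha\mathbf{1}+u'$ with $\alpha=|\mathscr{P}|/q^2$ and $u'\perp\mathbf{1}$, and similarly $\mathbf{1}_{\mathscr{L}}=\beta\mathbf{1}+v'$ with $\beta=|\mathscr{L}|/(q^2+q)$ and $v'\perp\mathbf{1}$. Using $M\mathbf{1}=(q+1)\mathbf{1}$ and $\mathbf{1}^{T}M=q\mathbf{1}^{T}$, the two mixed terms in the expansion of $\mathbf{1}_{\mathscr{P}}^{T}M\mathbf{1}_{\mathscr{L}}$ vanish and
\[
I(\mathscr{P},\mathscr{L}) = \alpha\beta\,\mathbf{1}^{T}M\mathbf{1} + (u')^{T}Mv' = \frac{|\mathscr{P}|\,|\mathscr{L}|}{q} + (u')^{T}Mv'.
\]
The off-diagonal term is handled by Cauchy–Schwarz together with the spectral identity:
\[
|(u')^{T}Mv'| \le \|M^{T}u'\|_2\,\|v'\|_2 = \sqrt{q}\,\|u'\|_2\,\|v'\|_2 \le \sqrt{q}\,\sqrt{|\mathscr{P}|\,|\mathscr{L}|},
\]
since $\|u'\|_2^2=|\mathscr{P}|-|\mathscr{P}|^2/q^2\le|\mathscr{P}|$ and analogously for $v'$. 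Combining the two pieces yields the stated inequality (in fact with constant $1$ in place of the weaker $2$).

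The only nontrivial step is the combinatorial identity $MM^{T}=qI+J$, which is immediate from counting lines through one or two prescribed points in $\mathbb{F}_q^2$; the rest is routine linear algebra. I do not expect a genuine obstacle here — the method is the standard finite-field analog of the Szemerédi–Trotter approach via algebraic incidence graphs, and the essential ingredient is just the rigidity of the affine plane (each pair of points determines exactly one line), which places the incidence graph in the optimally small spectral-gap regime.
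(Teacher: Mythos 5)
Your argument is correct, and in fact yields the inequality with constant $1$ in place of $2$: the identity $MM^{T}=qI+J$, the eigenvector computations $M\mathbf{1}=(q+1)\mathbf{1}$ and $\mathbf{1}^{T}M=q\mathbf{1}^{T}$, and the resulting bound $\|M^{T}u'\|_2=\sqrt{q}\,\|u'\|_2$ on the orthogonal complement all check out. The paper itself gives no proof of this statement --- it is quoted from Vinh's paper --- and your spectral (expander-mixing) argument is essentially the one used in that reference, so there is nothing further to reconcile.
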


\begin{proof}[Proof of Theorem \ref{Theorem-L-W-ineq:q-q^1/2}]
As in Section \ref{section-1}, we define 
a bijective
map $\phi\colon \mathbb{F}_q^3\to \mathbb{F}_q^3$ by $\phi(x, y, t)=(x, y, t+\frac{xy}{2})$, then 
\begin{align*}
    &\frac{1}{q^3}\sum_{(x_1, x_2, t)\in \mathbb{H}_q^1}
f_1(\pi_1(x_1, x_2, t))f_2(\pi_2(x_1, x_2, t))=\frac{1}{q^3}\sum_{x_1, x_2, t\in \mathbb{F}_q}f_1(\pi_1(\phi(x_1, x_2, t)))f_2(\pi_2(\phi(x_1, x_2, t)))\\
    &=\frac{1}{q^3}\sum_{x_1, x_2, t\in \mathbb{F}_q}f_1(x_2, t+x_1x_2)f_2(x_1, t)\\
    &=\frac{1}{q^3}\sum_{ (a_1, a_2), (b_1, b_2) \in \mathbb{F}^2_{q}, a_1 b_1+b_2=a_2}f_1(a_1, a_2)f_2(b_1, b_2).
\end{align*}
Let $f_1$ and $f_2$ be the indicator functions of $\pi_1(K)$ and $\pi_2(K)$, respectively.
Each element $(x_1,x_2,t)\in K$ determines a point
$(a_1,a_2) = \pi_1(x_1,x_2,t) \in P$
and a line $ \ell = \pi_2(x_1,x_2,t) \in L$, 
which satisfy the relation $a_1 b_1 + b_2 = a_2$.
Hence each element of $K$ gives rise to an incidence between $P$ and $L$.

Moreover, when $n=1$, the map
\[
(x_1,x_2,t) \longmapsto \bigl(\pi_1(x_1,x_2,t),\,\pi_2(x_1,x_2,t)\bigr)
\]
is injective, so distinct elements of $K$ yield distinct incidences.
Therefore,
\[
|K| \le I(P,L).
\]
Applying Theorem \ref{big}, we obtain
\[
\frac{|K|}{q^3}
\le
\frac{|\pi_1(K)|\,|\pi_2(K)|}{q^4}
+
2 q^{-5/2}\sqrt{|\pi_1(K)|\,|\pi_2(K)|}.
\]
This yields the desired conclusion.
\end{proof}

\section{Connections to other topics}\label{section7}
In this section, we address how our results are connected to some questions in the literature.

\subsection{Bhowmik, Iosevich, Koh, and Pham's boundedness problem}
Let $\mathcal{K}\colon \mathbb{F}_q^d\times \mathbb{F}_q^d\to \mathbb{R}$ be any function (which, below,  will play the role of the kernel of an integral operator for the counting measure), and  $\mathcal{G}=(V, E)$ a connected ordered graph on $m$ vertices. In a recent paper \cite{BIKP2023}, Bhowmik, Iosevich, Koh, and Pham introduced  the boundedness problem for a multilinear operator associated with $\mathcal{K}$ and $\mathcal{G}$. To be precise, define
\[ \Lambda=\Lambda_\mathcal{G}^\mathcal{K}: \mathscr{F}(\mathbb F_q^d) \times \cdots \times \mathscr{F}(\mathbb F_q^d) \rightarrow \mathbb R\] by
\begin{equation*}\label{op:1}
(f_1, \ldots, f_m) \mapsto \frac{1}{\mathcal{N}(\mathcal{G})}\sum_{x^1, \dots, x^m \in {\Bbb F}_q^d} \ \prod_{(i,j) \in {E}(\mathcal{G})} \mathcal{K}(x^i,x^j) \prod_{l=1}^m f_l(x^l),
\end{equation*}
where $\mathcal{N}(\mathcal{G})$ is the number of distinct embeddings of $\mathcal{G}$ in $\mathbb{F}_q^d$. 
The boundedness problem for  operator $\Lambda_\mathcal{G}^\mathcal{K}$ asks if one can determine  numbers $1\le  u_1, \ldots, u_m \le \infty$ for which there is a (smallest) constant $ \Lambda_\mathcal{G}^\mathcal{K} (u_1, \dots, u_m)> 0$ such that the inequality 
 \begin{equation} \label{generalproblem} 
 \frac{1}{\mathcal{N}(\mathcal{G})}\sum_{x^1, \dots, x^m \in {\Bbb F}_q^d} \ \prod_{(i,j) \in {E}(\mathcal{G})} \mathcal{K}(x^i,x^j) \prod_{l=1}^m f_l(x^l)\leq  \Lambda_\mathcal{G}^\mathcal{K}  (u_1, \dots, u_m) \prod_{i=1}^m {\|f_i\|}_{L^{u_i}(\mathbb{F}_q^d, dx)} 
 \end{equation}
holds for all nonnegative
$ f_1, \ldots, f_m \in \mathscr{F}(\mathbb{F}_q^d)$. Using the discrete Fourier analysis, they settled the case where  $\mathcal{G}$ is a connected graph with at most $4$ vertices and $\mathcal{K}(x, y)$ is defined by $\mathcal{K}(x, y)=1$ if $\|x-y\|=(x_1-y_1)^2+(x_2-y_2)^2=1$, and $\mathcal{K}(x, y)=0$ otherwise.

The operator $\Lambda_\mathcal{G}^\mathcal{K}$ is a generalization of several geometric counting functions: 
\begin{enumerate}
    \item If $\mathcal{G}=K_2$, and $f_1=f_2$ are the indicator functions of  a set $A$ in $\mathbb{F}_q^d$, then the operator $\Lambda_\mathcal{G}^\mathcal{K}$   
    counts the number of pairs of points in $A\times A$ of distance $1$. 
    \item If $\mathcal{G}=C_4$, and $f_1=f_2=f_3=f_4$ are the indicator functions of  a set $A$ in $\mathbb{F}_q^d$, then the operator $\Lambda_\mathcal{G}^\mathcal{K}$  
    counts the number of rhombi of side-length $1$ in $A$. 
    \item If $\mathcal{G}$ is a tree and $f_1=\cdots=f_m$ are the indicator functions of  a set $A$ in $\mathbb{F}_q^d$, then the operator $\Lambda_\mathcal{G}^{\mathcal{K}}$ 
    counts the number of copies of the tree $\mathcal{G}$ in $A$. 
\end{enumerate}

The inequality \eqref{LW11} for $n=1$ is essentially the same as the inequality \eqref{generalproblem} for $m=2$, with  suitable  $\mathcal{G}$ and $\mathcal{K}$ chosen. 
To see this, define $\phi\colon \mathbb{F}_q^3\to \mathbb{F}_q^3$ by $\phi(x, y, t)=(x, y, t+\frac{xy}{2})$.
Then, since $\phi$ is a bijective map, the left-hand side of \eqref{LW11} can be written 
\begin{align}\label{connection1}
\nonumber
    &\frac{1}{q^3}\sum_{(x_1, x_2, t)\in \mathbb{H}_q^1}
f_1(\pi_1(x_1, x_2, t))f_2(\pi_2(x_1, x_2, t))=\frac{1}{q^3}\sum_{x_1, x_2, t\in \mathbb{F}_q}f_1(\pi_1(\phi(x_1, x_2, t)))f_2(\pi_2(\phi(x_1, x_2, t)))\\
    &=\frac{1}{q^3}\sum_{x_1, x_2, t\in \mathbb{F}_q}f_1(x_2, t+x_1x_2)f_2(x_1, t).
\end{align}
Now let $\mathcal{G}$ be the complete graph of two vertices and define $\mathcal{K}$ by
\[\mathcal{K}((a_1, a_2), (b_1, b_2))=\begin{cases}
    1~&\mbox{if}~a_1b_1+b_2-a_2=0,\\
    0~&\mbox{otherwise} .
\end{cases}
\]

Then the left-hand side $\Lambda_\mathcal{G}^\mathcal{K}(f_1, f_2)$ of the inequality \eqref{generalproblem} is equal to 
\[\frac{1}{q^3}\sum_{ (a_1, a_2), (b_1, b_2) \in \mathbb{F}^2_{q}, a_1 b_1+b_2=a_2}f_1(a_1, a_2)f_2(b_1, b_2),\]
which coincides with the value \eqref{connection1}. Thus, in this case, both inequalities \eqref{LW11}   and \eqref{generalproblem} coincide with each other.
This connection suggests a further study of the operator $\Lambda_\mathcal{G}^\mathcal{K}$ for various  $\mathcal{K}$ and $\mathcal{G}$.

\subsection{The orthogonal projection problem} 
For a subset $K\subset\F_q^{2n+1}$ and an integer $r\geq 1$, let
\begin{align*}
    \mathscr{E}_r=\mathscr E_r(K) \; := & \; \big\{\, W\le\F_q^{2n+1}:\ \dim W=2n,\;\; K \text{ can be covered by at most } r \\
    &~~~~ \text{ (additive) translates of } W^\perp \,\big\},
\end{align*}
where a translate of $W^\perp$ means a subset of $\mathbb F_q^{2n+1}$ of the form $(x,t)+W^\perp$ for some $(u,\tau)\in \mathbb F_q^{2n+1}$
Chen~\cite{Chen} proved that
\begin{equation}\label{eq:Chen}
|\mathscr{E}_r|
\;\le\;
\begin{cases}
q^{2n-1}\,r, & \text{if } r\le \dfrac{|K|}{2},\\[2mm]
\dfrac{r\,q^{4n}}{(q^{2n}-r)\,|K|}, & \text{if } 0<r<q^{2n}.
\end{cases}
\end{equation}
Informally, this states that for most subspaces $W\subset \mathbb F_q^{2n+1}$, the \emph{covering number} of $K$ by translates of $W^\perp$ is not very small. 

Chen's result can be interpreted from the view of the (orthogonal) projection problem. To see this, for each $j=1, \ldots, 2n$, let $P_j: \mathbb F_q^{2n+1}\mapsto W_j$ be the orthogonal projection defined by
$P_j(x,t):=(\check{x}_j,t),$ where $W_j$ is the coordinate hyperplane given in~\eqref{coordinate-hyperplanes}.   Note that  $W_j^\perp=\mathrm{Ker}\, P_j$ and the translates of $W_j^\perp$ are exactly cosets of the subspace $W_j^\perp$ in $\mathbb F_q^{2n+1},$ which are in turn the fibres of $P_j$. Note that  each translate of $W_j^{\perp}$ can be  written in the form $(u,\tau)+W_j^{\perp}$ for uniquely determined $(u,\tau)\in W_j.$ Thus, we observe that for a subset $K$ of $\mathbb F_q^{2n+1}$,  $K\cap ((u,\tau)+W_j^{\perp})\ne \emptyset$ for $(u,\tau)\in W_j$ if and only if $(u,\tau)\in P_j(K)$. 
Thus, one can write 
\begin{equation}\label{covering-0}
K \;\subset\; \bigcup_{(u,\tau) \in P_j(K)}  \left( (u,\tau)+ W_j^{\perp}\right).
\end{equation}
This implies that the minimal number of translates of $W_j^\perp$ that cover a set $K \subset \mathbb{F}_q^{2n+1}$ coincides with $|P_j(K)|.$

Now let $\mathscr{H}=\{W_1,\dots,W_{2n}\}$ be the set of coordinate hyperplanes in $\mathbb F_q^{2n+1}$. If $|\mathscr H| = 2n > |\mathscr E_r|$, then, by the definition of $\mathscr E_r$, there exists at least one coordinate  hyperplane $W_j\in \mathscr H$ that does not belong to $\mathscr E_r$, and for this $j$ the covering number of $K$ by translates of $W_j^\perp$ is strictly larger than $r$, i.e., $P_j(K)$ contains more than $r$ points.

 Combining this observation with Chen's estimate \eqref{eq:Chen} and the fact that $|\mathscr{H}|=2n$, one finds a constant $c_n>0$ depending only on $n$ such that for every nonempty set $K\subset\mathbb{F}_q^{2n+1}$ there exists an index $j_0\in\{1,\dots,2n\}$ with
\begin{equation}\label{eq-x11}
|P_{j_0}(K)| \;\ge\; c_n\,\frac{|K|}{q^{2n}}.
\end{equation}
Thus, among the given $2n$  projections $P_{j}: \mathbb F_q^{2n+1}\mapsto W_j$ for $j=1,\ldots, 2n,$ there exists at least one projection $P_{j_0}$ under which the image of $K$ must have the size comparable to $|K|/q^{2n}$.

This discussion can be phrased in a more general framework. More precisely, let $(H, \cdot)$ be a (finite) set equipped with an algebraic structure with the identity and let $\{\Phi_i: H\mapsto V_i\, |\,i\in I\}$ be a family of algebraic projections, where $V_i$ are subobjects of $H$ with the same cardinality. Let $Z_i:=Ker \, \Phi_i$. Then the situation is analogous to the case of the orthogonal projections $P_j$: Each coset of $Z_j$ can be expressed in  the form $v\cdot Z_j$ for uniquely determined $v\in V_j,$ and a coset $v\cdot Z_j$ for $v\in V_j$  is equal to the fiber $\Phi_i^{-1}(v).$
Also we have that   for a subset $K$ of $H$,  $K\cap v\cdot Z_i\ne \emptyset$ for $v\in V_i$ if and only if $v\in \Phi_i(K)$. 
Thus, one can write
\begin{equation}\label{covering}
K \;\subset\; \bigcup_{v \in\Phi_i(K)} v\cdot Z.
\end{equation}
In particular, the minimal number of  cosets of $Z_i$ in $H$ that cover $K$ is equal to $|\Phi_i(K)|.$
In the light of \eqref{eq-x11}, one seek an index $i_0\in I$ for which the image $\Phi_{i_0}(K) \subset V_{i_0}$ is as large as possible. In practice, depending on which features of the original set one wishes to preserve, the choice of an appropriate family of projection maps is crucial. Our case concerns $H=\mathbb H^n(\mathbb F_q)$ and $\Phi_i=\pi_i$  for $i\in I=\{1,\ldots, 2n\}.$

In the following, we compare Chen's theorem and Corollary~\ref{Loomis-Whitney-inequality-F_p^q} from this point of view. For each $j=1, \ldots, 2n$, define $T_j:\mathbb{H}^n(\mathbb{F}_q)\to\F_q^{2n+1}$ by
\begin{equation}\label{eq:Tj}
T_j(x,t)=
\begin{cases}
(x,\ t+\tfrac12\,x_j x_{n+j}), & 1\le j\le n,\\[2pt]
(x,\ t-\tfrac12\,x_{j-n}\,x_j), & n+1\le j\le 2n.
\end{cases}
\end{equation}
This map is a bijective map with $P_j\circ T_j=\pi_i$ for all $j=1,\ldots, 2n$. We observe that for each $(u,\tau)\in W_j$, we have
\begin{equation}\label{inclusion}
T_j\big(\pi_j^{-1}(u,\tau)\big)=P_j^{-1}(u,\tau),
\end{equation}
i.e., $T_j$ maps each Heisenberg fibre $\pi^{-1}(u, \tau)$ bijectively onto the fibre $P_j^{-1}(u,\tau) $. Moreover, the restriction of $T_j$ to $W_j$ is the identity map.

  Let $K\subset \mathbb \mathbb H^n(\mathbb F_q)$ and fix $j=1, \ldots, 2n.$ Recall that $\mathrm{Ker}\, \pi_j=W_j^{\perp}$. Letting $H=\mathbb H^n(\mathbb F_q)$ and $\Phi_j=\pi_j$,  by \eqref{covering} we have
 \begin{equation}\label{eq-x33}
K \;\subset\; \bigcup_{(u,\tau)\in\pi_j(K)} (u,\tau)\cdot W_j^\perp.
\end{equation}
  Applying  $T_j$ on both sides of \eqref{eq-x33}, by the equality \eqref{inclusion} and the facts that $\pi_j^{-1}(u,\tau)=(u,\tau)\cdot W_j^{\perp}$ and $P_j^{-1}(u,\tau)=(u,\tau)+W_j^{-1}$, we obtain

\[
T_j(K) \;\subset\; \bigcup_{(u,\tau)\in\pi_j(K)} \bigl( (u,\tau)+W_j^\perp \bigr).
\]

Note that  each coset $(u,\tau)+W_j^{\perp}$ in the above union is necessary to cover $T_j(K).$ Thus, the quantity $|\pi_j(K)|$ coincides with the covering number of $T_j(K)$ by translates of $W_j^\perp$.
Now for each $j=1, \ldots, 2n$, define
\[
\phi_j \;:=\; T_j^{-1}\circ P_j\circ T_j : \mathbb{H}^n(\mathbb{F}_q) (\cong \mathbb{F}_q^{2n+1})\to \mathbb{H}^n(\mathbb{F}_q) (\cong \mathbb{F}_q^{2n+1}).
\]
These maps enjoy three basic features that we use repeatedly: they are idempotent retractions,
\[
\phi_j\circ\phi_j=\phi_j;
\]
they have the same fibres as $\pi_j$, in the sense that $\phi_j(x)=\phi_j(y)$ if and only if $\pi_j(x)=\pi_j(y)$; and, consequently, for every subset $K\subset \mathbb{H}^n(\mathbb{F}_q)$,
\[
|\phi_j(K)|=|\pi_j(K)|=\bigl|P_j\bigl(T_j(K)\bigr)\bigr|.
\]
Corollary~\ref{Loomis-Whitney-inequality-F_p^q} yields a constant $C_n>0$ such that
\[
\max_{1\le j\le 2n}|\pi_j(K)| \;\ge\; C_n\,|K|^{\frac{2n+1}{2(n+1)}} q^{-\frac{1}{2(n+1)}},
\]
and hence there exists an index $j^\prime\in\{1,\dots,2n\}$ for which
\begin{equation}\label{eq-x22}
|\phi_{j'}(K)| \;=\; |P_{j'}(T_{j'}(K))| \;\ge\; C_n\,|K|^{\frac{2n+1}{2(n+1)}} q^{-\frac{1}{2(n+1)}}.
\end{equation}
We now compare between (\ref{eq-x11}) and (\ref{eq-x22}). Using only the trivial constraint $|K| \le q^{2n+1}$, one checks that
\[
\frac{|K|^{\frac{2n+1}{2(n+1)}} q^{-\frac{1}{2(n+1)}}}{|K|/q^{2n}} \;\ge\; q^{2n-1}
\]
for all nonempty $K \subset \mathbb{H}^n(\mathbb{F}_q) (\cong \mathbb{F}_q^{2n+1})$.

Thus, with the same $\mathscr{H}$, the family $\{\pi_j\}_j$ of projections provides a strictly stronger lower bound for the maximal projection size in one of the vertical directions.

Another related question is a discrete analogue to the problem considered by Balogh, Fässler, Mattila, and Tyson~\cite{Balog} for the real Heisenberg group. To explain this, for
$K\subset \mathbb{H}^n(\mathbb{F}_q)$ and $1\leq m \leq n$, let $\mathscr{E}_r^{h,m}(K)$ denote the set of homogeneous subgroups $G\subset \mathbb{H}^n(\mathbb{F}_q)$ of size $q^m$  such that $K$ can be covered by at most $r$ left cosets of $G^\perp$. (See Section \ref{section2-1} for the notion \emph{homogeneous} and the notation $G^{\perp}$.) Then following Balogh, Fässler, Mattila, and Tyson, it is natural to seek upper bounds for $|\mathscr{E}_r^{h,m}(K)|$ in terms of $|K|$, $q$, $n$, and $m$. 
We plan to pursue this Heisenberg covering problem in a sequel paper.

\section{Acknowledgements}
Thang Pham and Dung The Tran would like to thank VIASM for the hospitality and for the excellent working conditions. Daewoong Cheong  sincerely thanks the Korea Institute for Advanced Study for financial support and hospitality and was partially supported by the research year program  of Chungbuk National University in 2025.

\end{document}